\documentclass{amsart}


\newtheorem{thm}{Theorem}
\newtheorem{lem}[thm]{Lemma}
\newtheorem{cor}[thm]{Corollary}

\theoremstyle{definition} \theoremstyle{example}
\theoremstyle{remark} \numberwithin{equation}{section}

\begin{document}

\begin{center}
{\bf Infinite norm decompositions of C$^*$-algebras}
\end{center}

\begin{center}
{\it Farkhad Arzikulov}
\end{center}

\email{arzikulovfn@yahoo.com}

\curraddr{Institute of Mathematics and Information technologies,
Tashkent}

 subjclass: {\it Primary} 46L35, 17C65; {\it Secondary} 47L30

keywords: {\it C$^*$-algebra, infinite norm decompositions}

\begin{center}
{\bf Abstract}
\end{center}

In the given article the notion of infinite norm decomposition of
a C$^*$-algebra is investigated. The norm decomposition is some
generalization of Peirce decomposition. It is proved that the
infinite norm decomposition of any C$^*$-algebra is a
C$^*$-algebra. C$^*$-factors with an infinite and a nonzero finite
projection and simple purely infinite C$^*$-algebras are
constructed.

\section*{Introduction}

In the given article the notion of infinite norm decomposition of
C$^*$-algebra is investigated. It is known that for any projection
$p$ of a unital C$^*$-algebra $A$ the next equality is valid
$A=pAp\oplus pA(1-p)\oplus (1-p)Ap\oplus (1-p)A(1-p)$, where
$\oplus$ is a direct sum of spaces. The norm decomposition is some
generalization of Peirce decomposition. First such infinite
decompositions were introduced in [1] by the author.

In this article a unital C$^*$-algebra $A$ with an infinite
orthogonal set $\{p_i\}$ of equivalent projections such that
$\sup_i p_i=1$, and the set $\sum_{ij}^o
p_iAp_j=\{\{a_{ij}\}:\,\text{for any indexes}\,i,j,\, a_{ij}\in
p_i Ap_j,\,\text{and}\, \Vert
\sum_{k=1,\dots,i-1}(a_{ki}+a_{ik})+a_{ii}\Vert\to
0\,\text{at}\,\, i\to \infty\}$ are considered. Note that all
infinite sets like $\{p_i\}$ are supposed to be countable.

The main results of the given article are the next:

- for any C$^*$-algebra $A$ with an infinite orthogonal set
$\{p_i\}$ of equivalent projections such that $\sup_i p_i=1$ the
set $\sum_{ij}^o p_iAp_j$ is a C$^*$-algebra with the
componentwise algebraic operations, the associative multiplication
and the norm.

- there exist a C$^*$-algebra $A$ and different countable
orthogonal sets $\{e_i\}$ and $\{f_i\}$ of equivalent projections
in $A$ such that $\sup_i e_i=1$, $\sup_i f_i=1$, $\sum_{ij}^o
e_iAe_j\neq \sum_{ij}^o f_iAf_j$.

- if $A$ is a W$^*$-factor of type II$_\infty$, then there exists
a countable orthogonal set $\{p_i\}$ of equivalent projections in
$A$ such that $\sum_{ij}^o p_iAp_j$ is a C$^*$-factor with a
nonzero finite and an infinite projection. In this case
$\sum_{ij}^o p_iAp_j$ is not a von Neumann algebra.

- if $A$ is a W$^*$-factor of type III, then for any countable
orthogonal set $\{p_i\}$ of equivalent projections in $A$. The
C$^*$-subalgebra $\sum_{ij}^o p_iAp_j$ is simple and purely
infinite. In this case $\sum_{ij}^o p_iAp_j$ is not a von Neumann
algebra.

- there exist a C$^*$-algebra $A$ with an orthogonal set $\{p_i\}$
of equivalent projections such that $\sum_{ij}^o p_iAp_j$ is not
an ideal of $A$.

\newpage

\section{infinite norm decompositions}

\begin{lem}
Let $A$ be a C$^*$-algebra, $\{p_i\}$ be an infinite orthogonal
set of projections with the least upper bound $1$ in the algebra
$A$ and let $\mathcal{A}=\{\{p_iap_j\}: a\in A\}$. Then,

1) the set $\mathcal{A}$ is a vector space with the next
componentwise algebraic operations
$$
\lambda \{p_iap_j\}=\{p_i \lambda a p_j\}, \lambda\in {\Bbb C}
$$
$$
\{p_iap_j\}+\{p_ibp_j\}=\{p_i(a+b)p_j\}, a,b\in A,
$$

2) the algebta $A$ and the vector space $\mathcal{A}$ can be
identified in the sense of the next map
$$
\mathcal{I}: a\in A\to \{p_iap_j\}\in \mathcal{A}.
$$
\end{lem}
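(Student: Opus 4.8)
The plan is to route both parts through the single map $\mathcal{I}$ and to concentrate all of the work in its injectivity. Let $\prod_{i,j}p_iAp_j$ be the full Cartesian product; it is a complex vector space under componentwise scalar multiplication and addition. Regard $\mathcal{I}$ as a map $A\to\prod_{i,j}p_iAp_j$, $a\mapsto(p_iap_j)_{i,j}$. From $p_i(\lambda a)p_j=\lambda\,p_iap_j$ and $p_i(a+b)p_j=p_iap_j+p_ibp_j$ it is linear, so its range $\mathcal{A}=\mathcal{I}(A)$ is automatically a linear subspace of the product space. The operations written in the statement are nothing but the componentwise operations of the product restricted to $\mathcal{A}$ and expressed through a representative $a$; their well-definedness is immediate from the entrywise linearity just noted, and so part 1) requires no further argument.

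For part 2), the map $\mathcal{I}$ is surjective onto $\mathcal{A}$ by the definition of $\mathcal{A}$ and linear by the computation above, so the asserted identification of $A$ with $\mathcal{A}$ is equivalent to the injectivity of $\mathcal{I}$, that is, to the implication that $p_iap_j=0$ for all $i,j$ forces $a=0$. This is the only substantive point, and it is precisely where the hypothesis $\sup_i p_i=1$ has to enter: the individual corners $p_iAp_j$ can carry no information forcing $a=0$ unless the projections exhaust the identity.

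To establish injectivity I would introduce the partial sums $E_n=\sum_{i=1}^{n}p_i$. Pairwise orthogonality makes each $E_n$ a projection, the sequence $(E_n)$ increasing, and $\sup_n E_n=\sup_i p_i=1$. Expanding the product and using $p_iap_j=0$ gives
\[
E_naE_m=\sum_{i=1}^{n}\sum_{j=1}^{m}p_iap_j=0\qquad(n,m\ge1),
\]
in particular $E_naE_n=0$ for every $n$. Since an increasing sequence of projections converges to its supremum in the strong operator topology, realizing $A$ in a faithful representation in which $E_n\to1$ strongly and using that multiplication is jointly strongly continuous on the unit ball yields $a=\lim_n E_naE_n=0$.

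The hard part is exactly this final passage to the limit. In a von Neumann algebra the hypothesis $\sup_i p_i=1$ is literally the strong convergence $E_n\to1$, and the conclusion is immediate; for a general C$^*$-algebra the supremum is only an order-theoretic least upper bound, and one must verify that it still produces the strong convergence needed to write $a=\lim_n E_naE_n$. Concretely, I expect the main obstacle to be excluding a singular discrepancy between $\sup_i p_i$ as computed in $A$ and the strong limit of $(E_n)$ in the ambient von Neumann completion; once it is checked that the hypothesis $\sup_i p_i=1$ transfers to the representation being used, injectivity---and with it the well-definedness of the operations and the identification $A\cong\mathcal{A}$---follows at once.
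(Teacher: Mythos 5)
Your reduction of both parts to the injectivity of $\mathcal{I}$ is fine and matches the structure of the paper's argument, but you do not actually prove injectivity: the step you yourself label ``the hard part'' --- passing from the order-theoretic hypothesis $\sup_i p_i=1$ in $A$ to the existence of a faithful representation in which $E_n=\sum_{i=1}^n p_i\to 1$ strongly --- is precisely the entire content of the lemma, and you leave it as an expectation rather than an argument. The worry you raise there is real, not a technicality. Take $A=C(X)$ with $X$ the one-point compactification of the natural numbers and $p_i$ the characteristic function of $\{i\}$: any continuous $f\geq E_n$ for all $n$ is $\geq 1$ at every natural number, hence $\geq 1$ everywhere by continuity, so $\sup_n E_n=1$ holds in $A$; yet in the faithful representation of $A$ by multiplication operators on $\ell^2(X)$ the sequence $E_n$ converges strongly to the projection onto the closed span of the point masses at the naturals, which is not $1$ (the point mass at infinity is orthogonal to it). So the strong limit of $E_n$ in a faithful representation can genuinely differ from the least upper bound computed in $A$, and ``realizing $A$ in a faithful representation in which $E_n\to 1$ strongly'' requires a construction you never supply. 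As written, the proof is circular at its only substantive point.

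The paper closes exactly this gap without any representation theory, by an order argument inside $A$ itself, and you could repair your proof the same way. From $p_iap_j=0$ for all $i,j$ one checks that the positive element $b_j=ap_ja^*$ satisfies $p_ib_j=b_jp_i=0$ for every $i$ (use $p_ja^*p_i=(p_iap_j)^*=0$). Hence $\{p_i\}\cup\{b_j\}$ is a commuting family and lies in some maximal commutative C$^*$-subalgebra $A_o$ containing $1$. If $b_j\neq 0$, then in $A_o$ the element $1-b_j/\Vert b_j\Vert$ dominates every $p_i$ (it equals $1$ where $p_i$ lives, since $p_ib_j=0$, and is nonnegative everywhere), so it is an upper bound for $\{p_i\}$; but it is not $\geq 1$ because $b_j$ is positive and nonzero, contradicting the assumption that $1$ is the \emph{least} upper bound. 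Therefore $ap_ja^*=0$, and the C$^*$-identity $\Vert ap_j\Vert^2=\Vert(ap_j)(ap_j)^*\Vert=\Vert ap_ja^*\Vert$ gives $ap_j=0$ for every $j$; symmetrically $p_ja=0$, so $aa^*$ is again a positive element annihilated by every $p_i$, and the same maximal-commutative-subalgebra argument yields $aa^*=0$, i.e.\ $a=0$. This intrinsic argument is what makes the lemma true for an arbitrary C$^*$-algebra, where your strong-convergence step is unavailable without further work.
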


\begin{proof}
Item 1) of the lemma can be easily proved.

Proof of item 2): We assert that $\mathcal{I}$ is a one-to-one
map. Indeed, it is clear, that for any $a\in A$ there exists a
unique set $\{p_iap_j\}$, defined by the element $a$.

Suppose that there exist different elements $a$ and $b$ in $A$
such that $p_iap_j=p_ibp_j$ for all $i$, $j$, i.e.
$\mathcal{I}(a)=\mathcal{I}(b)$. Then $p_i(a-b)p_j=0$ for all $i$
and $j$. Observe that
$p_i((a-b)p_j(a-b)^*)=((a-b)p_j(a-b)^*)p_i=0$ and
$(a-b)p_j(a-b)^*\geq 0$ for all  $i$, $j$. Therefore, the element
$(a-b)p_j(a-b)^*$ commutes with every projection in $\{p_i\}$.

We prove $(a-b)p_j(a-b)^*=0$. Indeed, there exists a maximal
commutative subalgebra $A_o$ of the algebra $A$, containing the
set $\{p_i\}$ and the element $(a-b)p_j(a-b)^*$. Since
$(a-b)p_j(a-b)^*p_i=p_i(a-b)p_j(a-b)^*=0$ for any $i$, then the
condition $(a-b)p_j(a-b)^*\neq 0$ contradicts the equality $\sup_i
p_i=1$.

Indeed, in this case $p_i\leq 1-1/\Vert (a-b)p_j(a-b)^*\Vert
(a-b)p_j(a-b)^*$ for any $i$. Since by $(a-b)p_j(a-b)^*\neq 0$ we
have $1>1-1/\Vert (a-b)p_j(a-b)^*\Vert (a-b)p_j(a-b)^*$, then we
get a contradiction with $\sup_i p_i=1$. Therefore
$(a-b)p_j(a-b)^*=0$.

Hence, since $A$ is a C$^*$-algebra, than
$\Vert(a-b)p_j(a-b)^*\Vert=\Vert((a-b)p_j)((a-b)p_j)^*\Vert=
\Vert((a-b)p_j)\Vert\Vert((a-b)p_j)^*\Vert=\Vert(a-b)p_j\Vert^2=0$
for any $j$. Therefore $(a-b)p_j=0$, $p_j(a-b)^*=0$ for any $j$.
Analogously, we can get $p_j(a-b)=0$, $(a-b)^*p_j=0$ for any $j$.
Hence the elements $a-b$, $(a-b)^*$ commute with every projection
in $\{p_i\}$. Then there exists a maximal commutative subalgebra
$A_o$ of the algebra $A$, containing the set $\{p_i\}$ and the
element $(a-b)(a-b)^*$. Since $p_i(a-b)(a-b)^*=(a-b)(a-b)^*p_i=0$
for any $i$, then the condition $(a-b)(a-b)^*\neq 0$ contradicts
the equality $\sup_i p_i=1$.

Therefore, $(a-b)(a-b)^*=0$, $a-b=0$, i.e. $a=b$. Thus the map
$\mathcal{I}$ is one-to-one.
\end{proof}

\begin{lem}
Let $A$ be a C$^*$-algebra, $\{p_i\}$ be an infinite orthogonal
set of projections with the least upper bound $1$ in the algebra
$A$ and $a\in A$. Then, if $p_iap_j=0$ for all $i$, $j$, then
$a=0$.
\end{lem}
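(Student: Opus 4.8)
The fastest approach is to recognize that this statement is nothing but the injectivity of $\mathcal{I}$ from the preceding lemma. Setting $b=0$, the hypothesis $p_iap_j=0$ for all $i,j$ says precisely that $\mathcal{I}(a)=\{p_iap_j\}=\{0\}=\mathcal{I}(0)$, so the one-to-one property of $\mathcal{I}$ proved in item 2) of that lemma forces $a=0$. If one prefers a self-contained argument --- which is really the mechanism behind that injectivity proof --- the plan is to climb from the two-sided compressions $p_iap_j$ back up to $a$ by stripping off one projection at a time with the C$^*$-identity, using the hypothesis together with its adjoint $p_ja^*p_i=(p_iap_j)^*=0$.

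First I would fix $j$ and consider the positive element $ap_ja^*$. From the hypothesis and its adjoint one gets $p_i(ap_ja^*)=(p_iap_j)a^*=0$ and $(ap_ja^*)p_i=a(p_ja^*p_i)=0$ for every $i$, so $ap_ja^*$ is positive and commutes with each $p_i$. The key step is to conclude from this that $ap_ja^*=0$. Since $ap_ja^*$ commutes with the mutually commuting family $\{p_i\}$, it lies, together with all the $p_i$, in some maximal abelian $*$-subalgebra $A_o$ of $A$. If $ap_ja^*\neq 0$, set $\lambda=\Vert ap_ja^*\Vert>0$ and $q=1-\lambda^{-1}ap_ja^*$; then $0\le q\le 1$ and $q\neq 1$, while the relation $p_i(ap_ja^*)=0$ gives $p_i\le q$ for every $i$ (a pointwise verification inside the commutative algebra $A_o$). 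Thus $q$ is an upper bound of $\{p_i\}$ distinct from $1$ yet below $1$, contradicting $\sup_ip_i=1$. Hence $ap_ja^*=0$, and the C$^*$-identity $\Vert ap_j\Vert^2=\Vert(ap_j)(ap_j)^*\Vert=\Vert ap_ja^*\Vert=0$ yields $ap_j=0$, and therefore also $p_ja^*=0$, for every $j$.

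It then remains to pass from $ap_j=0$ to $a=0$. Having $ap_j=0$ and $p_ja^*=0$ for all $j$, I would apply the same maximal-abelian-subalgebra argument one more time to the positive element $a^*a$: indeed $(a^*a)p_i=a^*(ap_i)=0$ and $p_i(a^*a)=(p_ia^*)a=(ap_i)^*a=0$, so $a^*a$ is positive, commutes with every $p_i$, and is annihilated by each of them; the argument of the previous paragraph then forces $a^*a=0$, whence $\Vert a\Vert^2=\Vert a^*a\Vert=0$ and $a=0$. The only nonroutine ingredient in the whole proof is this maximal-abelian-subalgebra-plus-least-upper-bound step that squeezes a nonzero positive element into an upper bound of $\{p_i\}$ lying below $1$ but different from $1$; everything else reduces to taking adjoints and applying the C$^*$-identity.
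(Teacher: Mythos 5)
Your proposal is correct and follows essentially the same route as the paper: the same maximal-abelian-subalgebra argument showing that a nonzero positive element $ap_ja^*$ annihilated by every $p_i$ would yield an upper bound $1-\lambda^{-1}ap_ja^*$ of $\{p_i\}$ strictly below $1$, followed by the C$^*$-identity to get $ap_j=0$ and then the same trick applied to $a^*a$ (the paper uses $aa^*$, a trivial difference). Your opening observation that the lemma also follows directly from the injectivity of $\mathcal{I}$ in the preceding lemma is likewise valid, and is in fact the shortcut the paper forgoes by repeating the argument verbatim.
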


\begin{proof}
Let $p\in \{p_i\}$. Observe that
$p_iap_ja^*=p_i(ap_ja^*)=ap_ja^*p_i=(ap_ja^*)p_i=0$ for all $i$,
$j$ and $ap_ja^*=ap_jp_ja^*= (ap_j)(p_ja^*=(ap_j)(ap_j)^*\geq 0$.
Therefore, the element $ap_ja^*$ commutes with all projections of
the set $\{p_i\}$.

We prove $ap_ja^*=0$. Indeed, there exists a maximal commutative
subalgebra $A_o$ of the algebra $A$, containing the set $\{p_i\}$
and the element $ap_ja^*$. Since $p_i(ap_ja^*)=(ap_ja^*)p_i=0$ for
any $i$, then the condition $ap_ja^*\neq 0$ contradicts the
equality $\sup_i p_i=1$ (see the proof of lemma 1). Hence
$ap_ja^*=0$.

Hence, since $A$ is a C$^*$-algebra, then $\Vert
ap_ja^*\Vert=\Vert(ap_j)(ap_j)^*\Vert=
\Vert(ap_j)\Vert\Vert(ap_j)^*\Vert=\Vert ap_j\Vert^2=0$ for any
$j$. Therefore $ap_j=0$, $p_ja^*=0$ for any $j$. Analogously we
have $p_ja=0$, $a^*p_j=0$ for any $j$. Hence the elements $a$,
$a^*$ commute with all projections of the set $\{p_i\}$. Then
there exists a maximal commutative subalgebra $A_o$ of the algebra
$A$, containing the set $\{p_i\}$ and the element $aa^*$. Since
$p_iaa^*=aa^*p_j=0$ for any $i$, then the condition $aa^*\neq 0$
contradicts the equality $\sup_i p_i=1$ (see the proof of lemma
1). Hence $aa^*=0$ and $a=0$.
\end{proof}

\begin{lem}
Let $A$ be a C$^*$-algebra of bounded linear operators on a
Hilbert space $H$, $\{p_i\}$ be an infinite orthogonal set of
projections in $A$ with the least upper bound $1$ in the algebra
$B(H)$ è $a\in A$. Then $a\geq 0$ if and only if for any finite
subset $\{p_k\}_{k=1}^n\subset \{p_i\}$ the inequality $pap\geq 0$
holds, where $p=\sum_{k=1}^n p_k$.
\end{lem}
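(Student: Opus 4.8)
The plan is to prove the two implications separately, with the forward direction being essentially immediate and the reverse direction carrying all of the content. For the forward direction, if $a\geq 0$ then $a=c^*c$ for some $c\in A$, and for any finite sum $p=\sum_{k=1}^n p_k$ (which is again a projection, the $p_k$ being mutually orthogonal) we have $pap=pc^*cp=(cp)^*(cp)\geq 0$. Note that the hypothesis $\sup_i p_i=1$ plays no role here.

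For the reverse direction, the idea is to realize $a$ as a strong-operator limit of its two-sided compressions $pap$ and then use that the cone of positive operators is closed. First I would organize the finite partial sums into a net: for a finite subset $F\subset\{p_i\}$ put $p_F=\sum_{p_k\in F}p_k$, so that $\{p_F\}$ is an increasing net of projections directed by inclusion, with $\sup_F p_F=\sup_i p_i=1$ computed in $B(H)$. The decisive structural fact I would invoke is that a bounded increasing net of projections in $B(H)$ converges in the strong operator topology to its supremum; hence $p_F\to 1$ strongly. This is exactly where the assumption that the least upper bound is taken in $B(H)$, a von Neumann algebra, rather than merely in $A$, is used.

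Next I would propagate this convergence through the compression. Using that multiplication is jointly strong-operator continuous on norm-bounded sets, from $p_F\to 1$ strongly and $\|p_F\|\leq 1$ I would first obtain $p_F a\to a$ strongly, and then, applying joint continuity once more to the bounded family $\{p_F a\}$ together with $p_F\to 1$, conclude $p_F a p_F\to a$ strongly. By hypothesis each $p_F a p_F\geq 0$; since strong convergence implies weak convergence and the positive cone of $B(H)$ is weakly closed, the limit satisfies $\langle a\xi,\xi\rangle=\lim_F\langle p_F a p_F\,\xi,\xi\rangle\geq 0$ for every $\xi\in H$, so $a\geq 0$. It is worth stressing that self-adjointness of $a$ is not an extra hypothesis but is forced: positivity of $p_F a p_F$ gives $p_F a^* p_F=p_F a p_F$ for all $F$, and passing to the strong limit yields $a^*=a$. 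Finally, since positivity of a self-adjoint element of a C$^*$-subalgebra of $B(H)$ coincides with its positivity as an operator, the conclusion $a\geq 0$ holds in $A$.

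The main obstacle I anticipate is the strong convergence $p_F a p_F\to a$: joint strong continuity of multiplication holds only on bounded sets, so the uniform bounds $\|p_F\|\leq 1$ and $\|p_F a\|\leq\|a\|$ are essential, and one must split the estimate $p_F a p_F\xi-a\xi=p_F a(p_F\xi-\xi)+(p_F a-a)\xi$ into the two terms, which tend to zero for different reasons (boundedness of $p_F a$ applied to $p_F\xi-\xi\to 0$, and strong convergence $p_F a-a\to 0$ applied to the fixed vector $\xi$). Everything else — closedness of the positive cone and the forward implication — is routine once this convergence is established.
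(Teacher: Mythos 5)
Your proof is correct, but it takes a genuinely different (and cleaner) route than the paper's. Both arguments share the same skeleton: the forward implication is immediate from $pap=(cp)^*(cp)$ when $a=c^*c$, and the reverse implication realizes $a$ as a limit of the compressions $p_Fap_F$ over the net of finite subsets and then appeals to closedness of the positive cone in a weak topology. The difference lies in how that convergence is obtained. You invoke the standard von Neumann algebra fact that an increasing net of projections converges strongly to its supremum (this is exactly where the hypothesis $\sup_i p_i=1$ in $B(H)$ enters), and then get $p_Fap_F\to a$ in the strong operator topology via the bounded-splitting estimate $p_Fap_F\xi-a\xi=p_Fa(p_F\xi-\xi)+(p_Fa-a)\xi$; positivity then passes to the weak limit, and self-adjointness of $a$ comes along for free. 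The paper instead proves ultraweak convergence of the same compressions by hand: it refines $\{p_i\}$ by a maximal orthogonal family of minimal (rank-one) projections of $B(H)$, expands vectors in the associated orthonormal basis, and estimates $\vert\langle a v,v\rangle-\langle pap\,v,v\rangle\vert$ by truncating Fourier expansions; it also handles self-adjointness separately beforehand (writing $a=c+id$ and killing $d$ via its Lemma 2), and it concludes positivity not directly from ultraweak closedness of the cone but by first compressing with minimal projections $e_\xi$ of a maximal commutative subalgebra containing $a$ and taking ultraweak limits of $e_\xi b_n^\alpha e_\xi$. Your approach buys brevity and reliability: it rests entirely on textbook facts (SOT convergence of increasing projection nets, joint SOT continuity of multiplication on bounded sets, weak closedness of the positive cone) and sidesteps the paper's delicate coordinate computation, which is considerably harder to verify and implicitly uses countability/separability-type bookkeeping; the paper's computation is more self-contained in spirit but gains nothing in generality, since both proofs use the ambient $B(H)$-structure in an essential way.
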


\begin{proof}
By positivity of the operator $T: a\to bab, a\in A$ for any $b\in
A$, if $a\geq 0$, then for any finite subset
$\{p_k\}_{k=1}^n\subset \{p_i\}$ the inequality $pap\geq 0$ holds.

Conversely, let $a\in A$. Suppose that for any finite subset
$\{p_k\}_{k=1}^n\subset \{p_i\}$ the inequality $pap\geq 0$ holds,
where $p=\sum_{k=1}^n p_k$.

Let $a=c+id$, for some nonzero self-adjoint elements $c$, $d$ in
$A$. Then
$(p_i+p_j)(c+id)(p_i+p_j)=(p_i+p_j)c(p_i+p_j)+i(p_i+p_j)d(p_i+p_j)\geq
0$ for all $i$, $j$. In this case the elements
$(p_i+p_j)c(p_i+p_j)$ and $(p_i+p_j)d(p_i+p_j)$ are self-adjoint.
Then $(p_i+p_j)d(p_i+p_j)=0$ and $p_idp_j=0$ for all $i$, $j$.
Hence by lemma 2 we have $d=0$. Therefore $a=c=c^*=a^*$, i.e.
$a\in A_{sa}$. Hence, $a$ is a nonzero self-adjoint element in
$A$. Let $b_n^\alpha=\sum_{kl=1}^n p_k^\alpha ap_l^\alpha$ for all
natural numbers $n$ and finite subsets
$\{p_k^\alpha\}_{k=1}^n\subset \{p_i\}$. Then the set
$(b_n^\alpha)$ ultraweakly converges to the element $a$.

Indeed, we have $A\subseteq B(H)$. Let $\{q_\xi\}$ be a maximal
orthogonal set of minimal projections of the algebra $B(H)$ such,
that $p_i=\sup_\eta q_\eta$, for some subset $\{q_\eta\}\subset
\{q_\xi\}$, for any $i$. For arbitrary projections $q$ and $p$ in
$\{q_\xi\}$ there exists a number $\lambda \in {\Bbb C}$ such,
that $qap=\lambda u$, where $u$ is an isometry in $B(H)$,
satisfying the conditions $q=uu^*$, $p=u^*u$. Let
$q_{\xi\xi}=q_\xi$, $q_{\xi\eta}$ be such element that
$q_\xi=q_{\xi\eta}q_{\xi\eta}^*$,
$q_\eta=q_{\xi\eta}^*q_{\xi\eta}$ for all different $\xi$ and
$\eta$. Then, let $\{\lambda_{\xi\eta}\}$ be a set of numbers such
that $q_\xi aq_\eta=\lambda_{\xi\eta}q_{\xi\eta}$ for all $\xi$,
$\eta$. In this case, since $q_\xi aa^*q_\xi=q_\xi(\sum_\eta
\lambda_{\xi\eta}\bar{\lambda}_{\xi\eta})q_\xi<\infty$, the
quantity of nonzero numbers of the set
$\{\lambda_{\xi\eta}\}_\eta$ ($\xi$-th string of the infinite
dimensional matrix $\{\lambda_{\xi\eta}\}_{\xi\eta}$) is not
greater then the countable cardinal number and the sequence
$(\lambda_n^\xi)$ of all these nonzero numbers converges to zero.
Let $v_{q_\xi}$ be a vector of the Hilbert space $H$, which
generates the minimal projection $q_\xi$. Then the set
$\{v_{q_\xi}\}$ forms a complete orthonormal system of the space
$H$. Let $v$ be an arbitrary vector of the space $H$ and $\mu_\xi$
be a coefficient of Fourier of the vector $v$, corresponding to
$v_{q_\xi}$, in relative to the complete orthonormal system
$\{v_{q_\xi}\}$. Then, since $\sum_\xi
\mu_\xi\bar{\mu}_\xi<\infty$, then the quantity of all nonzero
elements of the set $\{\mu_\xi\}_\xi$ is not greater then the
countable cardinal number and the sequence $(\mu_n)$ of all these
nonzero numbers converges to zero.

Let $\nu_\xi$ be the $\xi$-th  coefficient of Fourier
(corresponding to $v_{q_\xi}$) of the vector $a(v)\in H$ in
relative to the complete orthonormal system $\{v_{q_\xi}\}$. Then
$\nu_\xi=\sum_\eta \lambda_{\xi\eta}\mu_\eta$ and the scalar
product $<a(v),v>$ is equal to the sum $\sum_\xi \nu_\xi\mu_\xi$.
Since the element $a(v)$ belongs to  $H$ we have the quantity of
all nonzero elements in the set $\{\nu_\xi\}_\xi$ is not greater
then the countable cardinal number and the sequence $(\nu_n)$ of
all these nonzero numbers converges to zero.

Let $\varepsilon$ be an arbitrary positive number. Then, since
quantity of nonzero numbers of the sets $\{\mu_\xi\}_\xi$ and
$\{\nu_\xi\}_\xi$ is not greater then the countable cardinal
number, and $\sum_\xi \nu_\xi\bar{\nu}_\xi<\infty$, $\sum_\xi
\mu_\xi\bar{\mu}_\xi<\infty$, then there exists
$\{f_k\}_{k=1}^l\subset \{p_i\}$ such, that for the set of indexes
$\Omega_1=\{\xi: \exists p\in \{f_k\}_{k=1}^l, q_\xi\leq p\}$ we
have
$$
\vert\sum_\xi \nu_\xi\mu_\xi-\sum_{\xi\in \Omega_1}
\nu_\xi\mu_\xi\vert<\varepsilon.
$$
Then, since quantity of nonzero numbers of the sets
$\{\mu_\xi\}_\xi$ and $\{\lambda_{\xi\eta}\}_\eta$ is not greater
then the countable cardinal number, and $\sum_\eta
\lambda_{\xi\eta}\bar{\lambda}_{\xi\eta}<\infty$, $\sum_\xi
\mu_\xi\bar{\mu}_\xi<\infty$, then there exists
$\{e_k\}_{k=1}^m\subset \{p_i\}$ such, that for the set of indexes
$\Omega_2=\{\xi: \exists p\in \{e_k\}_{k=1}^m, q_\xi\leq p\}$ we
have
$$
\vert\sum_\eta \lambda_{\xi\eta}\mu_\eta-\sum_{\eta\in \Omega_2}
\lambda_{\xi\eta}\mu_\eta\vert<\varepsilon.
$$
Hence foe the finite set $\{p_k\}_{k=1}^n=\{f_k\}_{k=1}^l\cup
\{e_k\}_{k=1}^m$ and the set $\Omega=\{\xi: \exists p\in
\{p_k\}_{k=1}^n, q_\xi\leq p\}$ of indexes we have
$$
\vert\sum_\xi \nu_\xi\mu_\xi-\sum_{\xi\in \Omega} (\sum_{\eta\in
\Omega} \lambda_{\xi\eta}\mu_\eta)\mu_\xi\vert<\varepsilon.
$$
At the same time, $<(\sum_{kl=1}^n p_kap_l)(v),v>=\sum_{\xi\in
\Omega} (\sum_{\eta\in \Omega} \lambda_{\xi\eta}\mu_\eta)\mu_\xi$.
Therefore,
$$
\vert <a(v),v>-<(\sum_{kl=1}^n p_kap_l)(v),v>\vert<\varepsilon.
$$
Hence, since the vector $v$ and the number $\varepsilon$ are
chosen arbitrarily, we have the net $(b_n^\alpha)$ ultraweakly
converges to the element $a$.

We have there exists a maximal orthogonal set $\{e_\xi\}$ of
minimal projections of the algebra $B(H)$ of all bounded linear
operators on $H$, such that the element $a$ and the set
$\{e_\xi\}$ belong to some maximal commutative subalgebra $A_o$ of
the algebra $B(H)$. We have for any finite subset
$\{p_k\}_{k=1}^n\subset \{p_i\}$ and $e\in \{e_\xi\}$ the
inequality $ e(\sum_{kl=1}^n p_kap_l)e\geq 0$ holds by the
positivity of the operator $T: b\to ebe, b\in A$.

By the previous part of the proof the net $(e_\xi b_n^\alpha
e_\xi)_{\alpha n}$ ultraweakly converges to the element $e_\xi
ae_\xi$ for any index $\xi$. Then we have $e_\xi b_n^\alpha
e_\xi\geq 0$ for all $n$ and $\alpha$. Therefore, the ultraweak
limit $e_\xi a e_\xi$ of the net $(e_\xi b_n^\alpha e_\xi)_{\alpha
n}$ is a nonnegative element. Hence, $e_\xi a e_\xi\geq 0$.
Therefore, since $e_\xi$ is chosen arbitrarily then $a\geq 0$.
\end{proof}

\begin{lem}
Let $A$ be a C$^*$-algebra of bounded linear operators on a
Hilbert space $H$, $\{p_i\}$ be an infinite orthogonal set of
projections in $A$ with the least upper bound $1$ in the algebra
$B(H)$ è $a\in A$. Then
$$
\Vert a\Vert=\sup \{\Vert \sum_{kl=1}^n p_k a p_l \Vert :n\in N,
\{p_k\}_{k=1}^n \subseteq \{p_i\}\}.
$$
\end{lem}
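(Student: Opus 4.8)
The plan is to prove the equality by two inequalities. Write $S=\sup \{\Vert \sum_{kl=1}^n p_k a p_l \Vert :n\in N, \{p_k\}_{k=1}^n \subseteq \{p_i\}\}$, and for a finite subset $\{p_k\}_{k=1}^n\subseteq\{p_i\}$ set $p=\sum_{k=1}^n p_k$, so that $\sum_{kl=1}^n p_kap_l=pap$. Since the projections $p_k$ are mutually orthogonal, $p$ is itself a projection, whence $\Vert pap\Vert\leq \Vert p\Vert\,\Vert a\Vert\,\Vert p\Vert=\Vert a\Vert$. Taking the supremum over all finite subsets immediately gives $S\leq \Vert a\Vert$; this is the easy direction.

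For the reverse inequality I would reuse the ultraweak convergence already established in the proof of lemma 3. There the net $(b_n^\alpha)$, with $b_n^\alpha=\sum_{kl=1}^n p_k^\alpha ap_l^\alpha$ indexed by the finite subsets of $\{p_i\}$ directed by inclusion, was shown to converge ultraweakly to $a$. That argument used only the decomposition of $a$ through the matrix units $q_{\xi\eta}$ together with the convergence of the relevant Fourier expansions, so it applies to an arbitrary $a\in A$, not merely to a positive one. Hence the net of truncations $b_F=pap$ (over finite $p$) converges ultraweakly to $a$.

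The remaining ingredient is the ultraweak lower semicontinuity of the operator norm. For fixed unit vectors $v,w\in H$ the functional $x\to \langle xv,w\rangle$ is ultraweakly continuous, and since $\Vert x\Vert=\sup\{\vert\langle xv,w\rangle\vert:\Vert v\Vert=\Vert w\Vert=1\}$, the norm is a supremum of ultraweakly continuous functions and is therefore ultraweakly lower semicontinuous. Consequently $\Vert a\Vert\leq \liminf_F \Vert b_F\Vert$. As every truncation satisfies $\Vert b_F\Vert\leq S$ by the definition of $S$, we obtain $\Vert a\Vert\leq \liminf_F\Vert b_F\Vert\leq S$. Combining this with the first inequality yields $\Vert a\Vert=S$.

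I expect the only delicate point to be bookkeeping about the indexing: the truncations must be treated as a net over the directed set of finite subsets of $\{p_i\}$ ordered by inclusion, so that $\liminf_F$ is meaningful and dominated by the supremum $S$. Once the ultraweak convergence of this net to $a$ is granted from lemma 3, the lower semicontinuity argument closes the gap at once; equivalently, one may observe that the closed ball of radius $S$ in $B(H)$ is ultraweakly closed and contains every $b_F$, hence contains their ultraweak limit $a$, giving $\Vert a\Vert\leq S$.
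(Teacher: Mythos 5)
Your proof is correct, but the decisive direction is argued by a genuinely different route than the paper's. For $\Vert a\Vert\leq S$ the paper argues by contradiction using the \emph{statement} of Lemma 3 (the positivity criterion): if $\Vert a\Vert>\lambda=S$, then $p(\lambda 1-a)p\geq 0$ and $p(\lambda 1+a)p\geq 0$ for every finite sum of projections $p$, so Lemma 3 yields $-\lambda 1\leq a\leq \lambda 1$, contradicting $\Vert a\Vert>\lambda$. This is an order-theoretic argument, and as written it really only controls self-adjoint $a$ (both the initial inequality $-\Vert a\Vert 1\leq a\leq \Vert a\Vert 1$ in the easy direction and the final step ``$-\lambda 1\leq a\leq\lambda 1$ implies $\Vert a\Vert\leq\lambda$'' presuppose $a=a^*$). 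You instead extract the intermediate fact buried in the \emph{proof} of Lemma 3 --- that the truncation net $b_F=pap$, over finite subsets directed by inclusion, converges ultraweakly to $a$ --- and you correctly observe that this convergence argument never uses self-adjointness; you then close with ultraweak lower semicontinuity of the operator norm (equivalently, ultraweak closedness of the ball of radius $S$), which is a standard and sound fact. Your route buys a uniform treatment of arbitrary, not necessarily self-adjoint, elements and a direct (non-contradiction) argument; the paper's route stays inside the order framework it is assembling for Lemma 5 and uses Lemma 3 only as a black box. Your easy direction, $\Vert pap\Vert\leq\Vert p\Vert\Vert a\Vert\Vert p\Vert=\Vert a\Vert$, is likewise more elementary and more general than the paper's order-inequality version.
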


\begin{proof}
The inequality $-\Vert a\Vert 1\leq a \leq \Vert a\Vert 1$ holds.
Then $-\Vert a\Vert p\leq pap \leq \Vert a\Vert p$ for all natural
number $n$ and finite subset $\{p_k^\alpha\}_{k=1}^n\subset
\{p_i\}$, where $p=\sum_{k=1}^n p_k$. Therefore
$$
\Vert a\Vert\geq \sup \{\Vert \sum_{kl=1}^n p_k a p_l \Vert :n\in
N, \{p_k\}_{k=1}^n \subseteq \{p_i\}\}.
$$
At the same time, since the finite subset $\{p_k\}_{k=1}^n$ of
$\{p_i\}$ is chosen arbitrarily and by lemma 6 we have
$$
\Vert a\Vert= \sup \{\Vert \sum_{kl=1}^n p_k a p_l \Vert :n\in N,
\{p_k\}_{k=1}^n \subseteq \{p_i\}\}.
$$
Otherwise, if
$$
\Vert a\Vert>\lambda= \sup \{\Vert \sum_{kl=1}^n p_k a p_l \Vert
:n\in N, \{p_k\}_{k=1}^n \subseteq \{p_i\}\},
$$
then by lemma 3 $-\lambda 1\leq a \leq \lambda 1$. But the last
inequality is a contradiction.
\end{proof}

\begin{lem}
Let $A$ be a C$^*$-algebra of bounded linear operators on a
Hilbert space $H$, $\{p_i\}$ be an infinite orthogonal set of
projections in $A$ with the least upper bound $1$ in the algebra
$B(H)$, and let $\mathcal{A}=\{\{p_iap_j\}: a\in A\}$. Then,

1) the vector space $\mathcal{A}$ is a unit order space respect to
the order $\{p_iap_j\}\geq 0$ ($\{p_iap_j\}\geq 0$ if for any
finite subset $\{p_k\}_{k=1}^n\subset \{p_i\}$ the inequality
$pap\geq 0$ holds, where $p=\sum_{k=1}^n p_k$, and the norm
$$
\Vert \{p_iap_j\}\Vert=\sup \{\Vert \sum_{kl=1}^n p_k a p_l \Vert
:n\in N, \{p_k\}_{k=1}^n \subseteq \{p_i\}\}.
$$

2) the algebra $A$ and the unit order space $\mathcal{A}$ can be
identified as unit order spaces in the sense of the map
$$
\mathcal{I}: a\in A\to \{p_iap_j\}\in \mathcal{A}.
$$
\end{lem}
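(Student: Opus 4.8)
The plan is to obtain everything by transporting the order, the norm and the order-unit structure of the C$^*$-algebra $A$ across the bijection $\mathcal{I}$, since Lemmas 1, 3 and 4 have already prepared exactly the three facts needed: $\mathcal{I}$ is a linear bijection (Lemma 1), positivity in $A$ is detected by the finite compressions $pap$ (Lemma 3), and the C$^*$-norm of $a$ is the supremum of the norms of the finite blocks $\sum_{kl=1}^n p_k a p_l$ (Lemma 4). In other words, by construction the order and the norm placed on $\mathcal{A}$ in the statement are nothing other than the images under $\mathcal{I}$ of the order and the norm of $A$, and the whole task is to make this transport explicit.

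I would begin with item 2), which carries the essential content. By Lemma 1 the map $\mathcal{I}: a \mapsto \{p_iap_j\}$ is a linear bijection of $A$ onto $\mathcal{A}$. Unwinding the definition of the order on $\mathcal{A}$, one has $\mathcal{I}(a) \geq 0$ precisely when $pap \geq 0$ for every finite partial sum $p = \sum_{k=1}^n p_k$, which by Lemma 3 happens if and only if $a \geq 0$ in $A$; thus $\mathcal{I}$ maps the positive cone of $A$ bijectively onto that of $\mathcal{A}$ and is an order isomorphism. Similarly, the norm of $\mathcal{I}(a)$ given by the supremum formula equals $\Vert a\Vert$ by Lemma 4, so $\mathcal{I}$ is isometric. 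Together these show $\mathcal{I}$ is an isomorphism of $A$ onto $\mathcal{A}$ as ordered normed spaces, which is the asserted identification.

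For item 1) I would then verify the axioms of a unit order space directly on $\mathcal{A}$ by pulling them back through $\mathcal{I}^{-1}$. The natural order unit is the diagonal element $\mathcal{I}(1) = \{p_i 1 p_j\} = \{\delta_{ij} p_i\}$: given any $\{p_iap_j\}$, the order unit $1$ of $A$ furnishes $\lambda > 0$ with $-\lambda 1 \leq a \leq \lambda 1$, and applying the order isomorphism $\mathcal{I}$ yields $-\lambda \mathcal{I}(1) \leq \{p_iap_j\} \leq \lambda \mathcal{I}(1)$, so $\mathcal{I}(1)$ is indeed an order unit. The positive cone of $\mathcal{A}$ is proper and Archimedean because it is the $\mathcal{I}$-image of the proper Archimedean cone of the C$^*$-algebra $A$. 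Finally, since $-\lambda\mathcal{I}(1) \leq \{p_iap_j\} \leq \lambda\mathcal{I}(1)$ is equivalent to $-\lambda 1 \leq a \leq \lambda 1$, the order-unit norm of $\{p_iap_j\}$ equals $\inf\{\lambda > 0 : -\lambda 1 \leq a \leq \lambda 1\} = \Vert a\Vert$, which by Lemma 4 is exactly the supremum appearing in the statement; hence the norm in the statement coincides with the order-unit norm, as required for a unit order space.

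The main difficulty here is not any single estimate but the preliminary check that the prescription $\{p_iap_j\} \geq 0$ really defines a bona fide partial order on $\mathcal{A}$ — that the set of positive elements is a cone closed under addition and nonnegative scaling, and that $\{p_iap_j\} \geq 0$ together with $-\{p_iap_j\} \geq 0$ forces $\{p_iap_j\} = 0$. All of these reduce, via Lemma 3 and the injectivity supplied by Lemmas 1 and 2, to the corresponding and well-known facts in the C$^*$-algebra $A$, so once those lemmas are invoked the remainder is routine transport of structure and no new analytic work is needed.
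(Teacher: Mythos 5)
Your proposal is correct and follows exactly the route the paper intends: the paper's entire proof of this lemma is the one-line remark that it ``follows by lemmas 1, 3 and 4,'' and your argument is precisely the explicit transport of order, norm, and order-unit structure across the bijection $\mathcal{I}$ that this citation tacitly invokes. You have simply written out in full what the paper leaves implicit, so there is nothing to correct.
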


\begin{proof}
This lemma follows by lemmas 1, 3 and 4.
\end{proof}

{\it Remark.} Observe that by lemma 4 the order and the norm in
the unit order space $\mathcal{A}=\{\{p_iap_j\}: a\in A\}$ can be
defined as follows to: $\{p_iap_j\}\geq 0$, if $a\geq 0$; $\Vert
\{p_iap_j\}\Vert=\Vert a\Vert$. By lemmas 3 and 4 they are
equivalent to the order and the norm, defined in lemma 5,
correspondingly.

Let $A$ be a C$^*$-algebra, $\{p_i\}$ be a countable orthogonal
set of equivalent projections in $A$ such that $\sup_i p_i=1$ and
$$
\sum_{ij}^o p_iAp_j=\{\{a_{ij}\}:\,\text{for any indexes}\,i,j,\,
a_{ij}\in p_i Ap_j,\,\text{and}
$$
$$
\Vert \sum_{k=1,\dots,i-1}(a_{ki}+a_{ik})+a_{ii}\Vert\to
0\,\text{at}\,\, i\to \infty\}.
$$
If we introduce the componentwise algebraic operations in this
set, then $\sum_{ij}^o p_iAp_j$ becomes a vector space. Also, note
that $\sum_{ij}^o p_iAp_j$ is a vector subspace of $\mathcal{A}$.
Observe that $\sum_{ij}^o p_iAp_j$ is a normed subspace of the
algebra $\mathcal{A}$ and $\Vert \sum_{i,j=1}^n
a_{ij}-\sum_{i,j=1}^{n+1} a_{ij}\Vert\to 0$ at $n\to\infty$ for
any $\{a_{ij}\}\in \sum_{ij}^o p_iAp_j$.

Let $\sum_{ij}^o a_{ij}:=\lim_{n\to\infty} \sum_{i,j=1}^n a_{ij}$,
for any $\{a_{ij}\}\in \sum_{ij}^o p_iAp_j$, and
$$
C^*(\{p_iAp_j\}_{ij}):=\{\sum_{ij}^o a_{ij}:\{a_{ij}\}\in
\sum_{ij}^o p_iAp_j\}.
$$
Then $C^*(\{p_iAp_j\}_{ij})\subseteq A$. By lemma 5 $A$ and
$\mathcal{A}$ can be identified. We observe that, the normed
spaces $\sum_{ij}^o p_iAp_j$ and $C^*(\{p_iAp_j\}_{ij})$ can also
be identified. Further, without loss of generality we will use
these identifications.

\begin{thm}
Let $A$ be a unital C$^*$-algebra, $\{p_i\}$ be a countable
orthogonal set of equivalent projections in $A$ and $\sup_i
p_i=1$. Then $\sum_{ij}^o p_iAp_j$ is a C$^*$-subalgebra of $A$
with the componentwise algebraic operations, the associative
multiplication and the norm.
\end{thm}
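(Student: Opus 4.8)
The plan is to use the identifications set up above to reduce the statement to a fact about $A$ itself: that a certain subset of $A$ is a norm-closed $*$-subalgebra, which is then automatically a C$^*$-algebra. Throughout, write $e_n=\sum_{k=1}^n p_k$, a projection of $A$; unitality of $A$ guarantees that $e_n^\perp:=1-e_n$ also lies in $A$. By the observations preceding the theorem, under the map $\mathcal I$ of Lemmas 1 and 5 the space $\sum_{ij}^o p_iAp_j$ is identified with
$$
C:=C^*(\{p_iAp_j\}_{ij})=\{a\in A:\ \|a-e_nae_n\|\to 0\ \text{as}\ n\to\infty\},
$$
because for $a=\sum_{ij}^o a_{ij}$ one has $\sum_{i,j=1}^n a_{ij}=e_nae_n\to a$ in norm, while conversely every $a$ with $e_nae_n\to a$ arises in this way, its entries $p_iap_j$ satisfying the defining condition $\|\sum_{k<i}(a_{ki}+a_{ik})+a_{ii}\|=\|e_iae_i-e_{i-1}ae_{i-1}\|\to 0$. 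Since by the Remark the norm on $\sum_{ij}^o p_iAp_j$ is just the restriction of $\|\cdot\|_A$, it suffices to show that $C$ is a norm-closed $*$-subalgebra of $A$: it then inherits the C$^*$-identity, submultiplicativity and associativity from $A$, and transporting this structure through $\mathcal I$ yields the theorem, provided the componentwise operations are identified with those of $A$.

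The technical core is one observation: if $a\in C$ then $\|ae_n^\perp\|\to 0$ and $\|e_n^\perp a\|\to 0$. Indeed, $e_ne_n^\perp=0$ gives $ae_n^\perp=(a-e_nae_n)e_n^\perp$, so $\|ae_n^\perp\|\le\|a-e_nae_n\|\to 0$, and symmetrically on the left. Granting this, stability under linear combinations and under the involution is immediate, since compression commutes with both, $e_n(\lambda a+\mu b)e_n=\lambda e_nae_n+\mu e_nbe_n$ and $e_na^*e_n=(e_nae_n)^*$. For products, take $a,b\in C$ and expand
$$
ab-e_nabe_n=e_n^\perp ab+e_nabe_n^\perp,
$$
whence $\|ab-e_nabe_n\|\le\|e_n^\perp a\|\,\|b\|+\|a\|\,\|be_n^\perp\|\to 0$, so $ab\in C$.

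The same estimate shows the multiplication is well defined componentwise and agrees with that of $A$: the tail of $\sum_k a_{ik}b_{kj}=p_ia(\sum_k p_k)bp_j$ over $k\ge M$ is bounded in norm by $\|ae_{M-1}^\perp\|\,\|b\|$, so the series converges in norm, necessarily to $p_iabp_j$; thus $(a\cdot b)_{ij}=\sum_k a_{ik}b_{kj}$ is exactly the matrix representation of the product $ab$ of $A$. Norm-closedness of $C$ is a routine $\varepsilon/3$ argument: for $a^{(m)}\in C$ with $a^{(m)}\to a$, bound $\|a-e_nae_n\|$ by $\|a-a^{(m)}\|+\|a^{(m)}-e_na^{(m)}e_n\|+\|e_n(a^{(m)}-a)e_n\|$ and choose $m$ large, then $n$ large. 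Assembling these, $C$ is a norm-closed $*$-subalgebra of the C$^*$-algebra $A$, hence a C$^*$-algebra, and under $\mathcal I$ it becomes $\sum_{ij}^o p_iAp_j$ with the componentwise operations and the restricted norm. I expect the sole real obstacle to be the closure under multiplication, and more precisely the key estimate $\|ae_n^\perp\|,\|e_n^\perp a\|\to 0$ for $a\in C$; once that is available, every remaining verification is formal and the C$^*$-identity comes for free from the ambient algebra.
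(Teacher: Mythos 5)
Within the framework the paper sets up just before the theorem (the identification of $\sum_{ij}^o p_iAp_j$ with the set of elements of $A$ that are norm limits of their finite corners), your proof is correct, and it takes a genuinely different and cleaner route than the paper's own. The paper first proves norm-closedness by an $\varepsilon$-argument on the hook sums $\Vert e_iae_i-e_{i-1}ae_{i-1}\Vert$ (essentially your $\varepsilon/3$ computation, phrased with hooks rather than corners), and only then obtains closure under products indirectly, by a two-stage limit: the finite corners $a_m=e_mae_m$, $b_n=e_nbe_n$ lie in the space, $a_mb_n\to ab_n$ as $m\to\infty$, so $ab_n$ lies in the space by closedness, and then $ab_n\to ab$ as $n\to\infty$, so $ab$ does too, by closedness again. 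Your key estimate $\Vert ae_n^\perp\Vert,\Vert e_n^\perp a\Vert\le\Vert a-e_nae_n\Vert\to 0$, together with the identity $ab-e_nabe_n=e_n^\perp ab+e_nabe_n^\perp$, has no counterpart in the paper and replaces that whole mechanism by a single inequality; it also keeps the argument coherent, since every step of yours concerns one and the same set $C=\{a\in A:\Vert a-e_nae_n\Vert\to 0\}$. You moreover check stability under the involution, which the paper never mentions although it is needed to conclude that a norm-closed subalgebra of $A$ is a C$^*$-algebra.

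The place where you are exposed is the identification $C=\sum_{ij}^o p_iAp_j$ itself, and the exposure is real --- though it is the paper's defect rather than yours, and the paper's own proof leans on it harder than you do. The direction you verify (corner convergence implies the hook condition $\Vert\sum_{k<i}(a_{ki}+a_{ik})+a_{ii}\Vert\to 0$) is fine; the converse, which you import by quoting ``$e_nae_n\to a$ in norm for $a=\sum_{ij}^o a_{ij}$'', is exactly the paper's unproved assertion that $\lim_n\sum_{i,j=1}^na_{ij}$ exists, and that assertion is false: successive hooks tending to zero do not make the corners a Cauchy sequence. Concretely, take $A=B(H)$ with rank-one projections $p_i$ coming from a basis $\{\xi_i\}$, split the indices into consecutive blocks $I_k$ of size $N_k\to\infty$, set $m_k=\min I_k$ and $v_k=N_k^{-1/2}\sum_{i\in I_k}\xi_i$, and let $a$ be the partial isometry with $a\xi_{m_k}=v_k$ and $a\xi_j=0$ for all other $j$. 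The hooks of $a$ and of $a^*$ have norm at most $2N_k^{-1/2}\to 0$, so both of these matrices satisfy the literal defining condition of $\sum_{ij}^o p_iAp_j$, yet $a^*a=\sum_k p_{m_k}$, whose hook at each position $m_k$ is the projection $p_{m_k}$, of norm $1$. Thus the hook-condition set is not even closed under multiplication, and the theorem is false under the literal definition; it is true for $C$, which is the set your argument actually handles (and which the paper handles incoherently: its closedness step concerns the hook set, its multiplication step the corner set). In short, your proposal is a complete and correct proof of the corrected statement, but the reduction of the stated theorem to that statement --- the one step you took on faith from the paper --- is precisely the step that cannot be justified.
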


\begin{proof}
We have $\sum_{ij}^o p_iAp_j$ is a normed subspace of the algebra
$A$.

Let $(a_n)$ be a sequence of elements in $\sum_{ij}^o p_iAp_j$
such that $(a_n)$ norm converges to some element $a\in A$. We have
$p_ia_np_j\to p_ia p_j$ at $n\to \infty$ for all $i$ and $j$.
Hence $p_iap_j\in p_iAp_j$ for all $i$, $j$. Let
$b^n=\sum_{k=1}^n(p_{n-1}ap_k + p_kap_{n-1})+p_nap_n$ and
$c_m^n=\sum_{k=1}^n(p_{n-1}a_mp_k + p_ka_mp_{n-1})+p_na_mp_n$, for
any $n$. Then $c_m^n\to b^n$ at $m\to \infty$. It should be proven
that $\Vert b_n\Vert\to 0$ at $n\to\infty$.

Let $\varepsilon \in {\Bbb R}_+$. Then there exists $m_o$ such
that $\Vert a-a_m\Vert<\varepsilon$ for any $m>m_o$. Also for all
$n$ and $\{p_k\}_{k=1}^n\subset \{p_i\}$ $\Vert
(\sum_{k=1}^np_k)(a-a_m)(\sum_{k=1}^np_k)\Vert<\varepsilon$. Hence
$\Vert b^n-c_m^n\Vert<2\varepsilon$ for any $m>m_o$. At the same
time, $\Vert b^n-c_{m_1}^n\Vert<2\varepsilon$, $\Vert
b^n-c_{m_2}^n\Vert<2\varepsilon$ for all $m_o<m_1$, $m_2$. Since
$(a_n)\subset \sum_{ij}^o p_iAp_j$ then for any $m$ $\Vert
c_m^n\Vert\to 0$ at $n\to \infty$. Hence, since $\Vert
c_{m_1}^n\Vert\to 0$ and $\Vert c_{m_2}^n\Vert\to 0$ at $n\to
\infty$, then there exists $n_o$ such that $\Vert
c_{m_1}^n\Vert<\varepsilon$, $\Vert c_{m_2}^n\Vert<\varepsilon$
and $\Vert c_{m_1}^n+c_{m_2}^n\Vert<2\varepsilon$ for any $n>n_o$.
Then $\Vert 2b_n\Vert=\Vert
b^n-c_{m_1}^n+c_{m_1}^n+c_{m_2}^n+b^n-c_{m_2}^n\Vert\leq \Vert
b^n-c_{m_1}^n\Vert+\Vert c_{m_1}^n+c_{m_2}^n\Vert+\Vert
b^n-c_{m_2}^n\Vert<2\varepsilon+2\varepsilon+2\varepsilon=6\varepsilon$
for any $n>n_o$, i.e. $\Vert b_n\Vert<3\varepsilon$ for any
$n>n_o$. Since $\varepsilon$ is arbitrarily chosen then $\Vert
b_n\Vert\to 0$ at $n\to\infty$. Therefore $a\in\sum_{ij}^o
p_iAp_j$. Since the sequence $(a_n)$ is arbitrarily chosen then
$\sum_{ij}^o p_iAp_j$ is a Banach space.

Let $\{a_{ij}\}$, $\{b_{ij}\}$ be arbitrary elements of the Banach
space $\sum_{ij}^o p_iAp_j$. Let $a_m=\sum_{kl=1}^m a_{kl}$,
$b_m=\sum_{kl=1}^m b_{kl}$ for all natural numbers $m$. We have
the sequence $(a_m)$ converges to $\{a_{ij}\}$ and the sequence
$(b_m)$ converges to $\{b_{ij}\}$ in $\sum_{ij}^o p_iAp_j$. Also
for all $n$ and $m$ $a_mb_n\in \sum_{ij}^o p_iAp_j$. Then for any
$n$ the sequence $(a_mb_n)$ converges to $\{a_{ij}\}b_n$ at $m\to
\infty$. Hence $\{a_{ij}\}b_n\in \sum_{ij}^o p_iAp_j$. Note that
$\sum_{ij}^o p_iAp_j\subseteq A$. Therefore for any
$\varepsilon\in {\Bbb R}_+$ there exists $n_o$ such that
$\Vert\{a_{ij}\}b_{n+1}-\{a_{ij}\}b_n\Vert\leq
\Vert\{a_{ij}\}\Vert \Vert b_{n+1}-b_n\Vert\leq \varepsilon$ for
any $n>n_o$. Hence the sequence $(\{a_{ij}\}b_n)$ converges to
$\{a_{ij}\}\{b_{ij}\}$ at $n\to \infty$. Since $\sum_{ij}^o
p_iAp_j$ is a Banach space then $\{a_{ij}\}\{b_{ij}\}\in
\sum_{ij}^o p_iAp_j$. Since $\sum_{ij}^o p_iAp_j\subseteq A$, then
$\sum_{ij}^o p_iAp_j$ is a C$^*$-algebra.
\end{proof}

Let $H$ be an infinite dimensional Hilbert space, $B(H)$ be the
algebra of all bounded linear operators. Let $\{p_i\}$ be a
countable orthogonal set of equivalent projections in $B(H)$ and
$\sup_i p_i=1$. Let $\{\{p_j^i\}_j\}_i$ be the set of infinite
subsets of $\{p_i\}$ such that for all distinct $\xi$ and $\eta$
$\{p_j^\xi\}_j\cap \{p_j^\eta\}_j=\oslash$,
$\vert\{p_j^\xi\}_j\vert=\vert \{p_j^\eta\}_j\vert$  and
$\{p_i\}=\cup_i \{p_j^i\}_j$. Then let $q_i=\sup_j p_j^i$ in
$B(H)$, for all $i$. Then $\sup_i q_i=1$ and $\{q_i\}$ be a
countable orthogonal set of equivalent projections. Then we say
that the countable orthogonal set $\{q_i\}$ of equivalent
projections {\it is defined by the set} $\{p_i\}$ in $B(H)$. We
have the next corollary.

\begin{cor}
Let $A$ be a unital C$^*$-algebra of bounded linear operators in a
Hilbert space $H$, $\{p_i\}$ be a countable orthogonal set of
equivalent projections in $A$ and $\sup_i p_i=1$. Let $\{q_i\}$ be
a countable orthogonal set of equivalent projections in $B(H)$
defined by the set $\{p_i\}$ in $B(H)$. Then $\sum_{ij}^o q_iAq_j$
is a C$^*$-subalgebra of the algebra $A$.
\end{cor}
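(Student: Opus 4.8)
The plan is to recognize that although the projections $q_i$ need not lie in $A$, they do lie in the von Neumann algebra $M=A''$ generated by $A$ in $B(H)$, and to deduce the corollary by applying Theorem 6 inside $M$ and then intersecting back with $A$. Each $q_i=\sup_j p_j^i$ is the strong limit of the finite sums $\sum_{j\le J}p_j^i$ with $p_j^i\in A$, so $q_i\in M$; and since $M$ is a unital von Neumann algebra containing $A$, the family $\{q_i\}$ is a countable orthogonal set of projections in $M$ with $\sup_i q_i=1$. Moreover the $q_i$ are equivalent in $M$: because the blocks $\{p_j^i\}_j$ all have the same (countable) cardinality and the $p$'s are mutually equivalent in $A$, one assembles partial isometries $v_j$ implementing $p_j^i\sim p_j^{i'}$ and sets $v=\sum_j v_j$ (strong sum), obtaining $v\in M$ with $v^*v=q_i$ and $vv^*=q_{i'}$.

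With these verifications in hand, Theorem 6 applies to the unital C$^*$-algebra $M$ equipped with the countable orthogonal set $\{q_i\}$ of equivalent projections of supremum $1$. Hence $\sum_{ij}^o q_iMq_j$ is a C$^*$-subalgebra of $M$. Under the identification of the remarks preceding Theorem 6 this C$^*$-algebra is the subset $\{x\in M:\Vert\sum_{k<i}(q_kxq_i+q_ixq_k)+q_ixq_i\Vert\to 0\}$ of $M$, where injectivity of the map $x\mapsto\{q_ixq_j\}$ on $M$ is guaranteed by lemma 1 applied in $M$ (using $\sup_i q_i=1$).

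It remains to compare this with $\sum_{ij}^o q_iAq_j$. The defining slice condition refers only to the element itself and to its compressions by the $q_i$, quantities computed identically whether the element is regarded as lying in $A$ or in $M$. Consequently $\sum_{ij}^o q_iAq_j=A\cap\sum_{ij}^o q_iMq_j$, namely the set of those $a\in A$ whose $q$-slices vanish in norm. Now $A$ is a C$^*$-subalgebra of $M$ and $\sum_{ij}^o q_iMq_j$ is a C$^*$-subalgebra of $M$, so their intersection is norm-closed, self-adjoint and closed under multiplication, i.e. a C$^*$-subalgebra of $M$; being contained in $A$, it is a C$^*$-subalgebra of $A$. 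This proves the corollary.

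The main obstacle, and the reason the corollary is not a literal instance of Theorem 6, is that $q_i\notin A$, so the finite truncations $Q_m a Q_m$ with $Q_m=\sum_{k\le m}q_k$ used in the multiplicativity step of Theorem 6 fall outside $A$, and one cannot replay that argument internally to $A$. Passing to $M=A''$ is precisely what repairs this: there the $q_i$ are honest projections of the algebra, Theorem 6 is available verbatim, and the purely metric nature of the slice condition lets one descend back to $A$ by intersection. The only points requiring care are the strong-operator verifications that $q_i\in M$ and that the $q_i$ are $M$-equivalent, both of which reduce to the stated block construction.
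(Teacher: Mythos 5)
Your reduction to $M=A''$ is sound as far as it goes: the $q_i$ are indeed projections in $M$, orthogonal, with supremum $1$, and mutually equivalent in $M$ (your strongly convergent sum of partial isometries taken from $A$ is correct), so Theorem 6 applied to $M$ does give that $\sum_{ij}^o q_iMq_j$ is a C$^*$-subalgebra of $M$, and hence $A\cap\sum_{ij}^o q_iMq_j$ is a C$^*$-subalgebra of $A$. This is a genuinely different route from the paper, whose proof instead asserts the identification $q_iAq_j=\{\{p_\xi^i a p_\eta^j\}_{\xi\eta}:a\in A\}$, concludes from it that $q_iAq_j\subset A$, and then repeats the proof of Theorem 6 inside $A$ itself.

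The gap is the sentence ``Consequently $\sum_{ij}^o q_iAq_j=A\cap\sum_{ij}^o q_iMq_j$.'' Under the paper's definition, an element of $\sum_{ij}^o q_iAq_j$ is a family $\{a_{ij}\}$ with $a_{ij}\in q_iAq_j$ and vanishing tail norms; it is \emph{not} given as an element of $A$ subject to a slice condition, and since $q_i\notin A$ the components $q_iaq_j$, and hence the norm-convergent sum $\sum_{ij}^o a_{ij}$, need not lie in $A$ at all. Only the inclusion $A\cap\sum_{ij}^o q_iMq_j\subseteq\sum_{ij}^o q_iAq_j$ is automatic; the reverse inclusion is precisely the claim $q_iAq_j\subseteq A$ that the paper's own proof asserts without justification, and you never address it. Moreover it can fail: take $A=K(H)+{\Bbb C}1$ ($K(H)$ the compact operators) with $\{p_i\}$ the rank-one projections onto an orthonormal basis, which are equivalent in $A$ with supremum $1$. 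For any admissible system of blocks, each $q_i$ has infinite rank and infinite corank, so $q_i\notin A$; yet $q_1=q_1\cdot 1\cdot q_1\in q_1Aq_1$, so the family $(q_1,0,0,\dots)$ belongs to $\sum_{ij}^o q_iAq_j$ as defined, while $q_1\notin A$ and so $q_1\notin A\cap\sum_{ij}^o q_iMq_j$. Thus what you have actually proved is the true and clean statement that the set of $a\in A$ whose $q$-tails vanish in norm is a C$^*$-subalgebra of $A$; you have not proved that this set coincides with $\sum_{ij}^o q_iAq_j$. That identification is the real content of the corollary --- it is the very step the paper's proof only asserts --- and the example above shows it requires either extra hypotheses or a reinterpretation of the set $\sum_{ij}^o q_iAq_j$, so your argument cannot be completed as written.
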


\begin{proof}
Let $\{\{p_j^i\}_j\}_i$ be the set of infinite subsets of
$\{p_i\}$ such that for all distinct $\xi$ and $\eta$
$\{p_j^\xi\}_j\cap \{p_j^\eta\}_j=\oslash$,
$\vert\{p_j^\xi\}_j\vert=\vert \{p_j^\eta\}_j\vert$ and
$\{p_i\}=\cup_i \{p_j^i\}_j$. Then let $q_i=\sup_j p_j^i$ in
$B(H)$, for all $i$. Then we have for all $i$ and $j$
$q_iAq_j=\{\{p_\xi^iap_\eta^j\}_{\xi\eta}: a\in A\}$. Hence
$q_iAq_j\subset A$ for all $i$ and $j$.

The rest part of the proof is the repeating of the proof of
theorem 6.
\end{proof}

{\it Example.} {\it 1.} Let $\mathcal{M}$ be the closure on the
norm of the inductive limit $\mathcal{M}_o$ of the inductive
system
$$
C\to M_2(C)\to M_3(C)\to M_4(C)\to \dots,
$$
where $M_n(C)$ is mapped into the upper left corner of
$M_{n+1}(C)$. Then $\mathcal{M}$ is a C$^*$-algebra (\cite{Bra}).
The algebra $\mathcal{M}$ contains the minimal projections of the
form $e_{ii}$, where $e_{ij}$ is an infinite dimensional matrix,
whose $(i,i)$-th component is $1$ and the rest components are
zeros. These projections form the countable orthogonal set
$\{e_{ii}\}_{i=1}^\infty$ of minimal projections. Let
$$
M_n^o({\Bbb C})=\{\sum_{ij}\lambda_{ij}e_{ij}:\,\text{for any
indexes}\,i,j, \lambda_{ij}\in {\Bbb C},\,\text{and}
$$
$$\Vert
\sum_{k=1,\dots,i-1}(\lambda_{ki}e_{ki}+\lambda_{ik}e_{ik})+\lambda_{ii}e_{ii}\Vert\to
0\,\text{at}\,\, i\to \infty\}.
$$
Then ${\Bbb C}\cdot 1+M_n^o({\Bbb C})=\mathcal{M}$ (see
\cite{arz}) and by theorem 6 $M_n^o({\Bbb C})$ is a simple
C$^*$-algebra. Note that there exists a mistake in the formulation
of theorem 3 in \cite{arz}. ${\Bbb C}\cdot 1+M_n^o({\Bbb C})$ is a
C$^*$-algebra. But the algebra ${\Bbb C}\cdot 1+M_n^o({\Bbb C})$
is not simple. Because ${\Bbb C}\cdot 1+M_n^o({\Bbb C})\neq
M_n^o({\Bbb C})$ and $M_n^o({\Bbb C})$ is an ideal of the algebra
${\Bbb C}\cdot 1+M_n^o({\Bbb C})$, i.e. $[{\Bbb C}\cdot
1+M_n^o({\Bbb C})]\cdot M_n^o({\Bbb C})\subseteq M_n^o({\Bbb C})$.

{\it 2.} There exist a C$^*$-algebra $A$ and different countable
orthogonal sets $\{e_i\}$ and $\{f_i\}$ of equivalent projections
in $A$ such that $\sup_i e_i=1$, $\sup_i f_i=1$, $\sum_{ij}^o
e_iAe_j\neq \sum_{ij}^o f_iAf_j$. Indeed, let $H$ be an infinite
dimensional Hilbert space, $B(H)$ be the algebra of all bounded
linear operators. Let $\{p_i\}$ be a countable orthogonal set of
equivalent projections in $B(H)$ and $\sup_i p_i=1$. Then
$\sum^o_{ij} p_iB(H)p_j\subset B(H)$. Let $\{\{p_j^i\}_j\}_i$ be
the set of infinite subsets of $\{p_i\}$ such that for all
distinct $\xi$ and $\eta$ $\{p_j^\xi\}_j\cap
\{p_j^\eta\}_j=\oslash$, $\vert\{p_j^\xi\}_j\vert=\vert
\{p_j^\eta\}_j\vert$ and $\{p_i\}=\cup_i \{p_j^i\}_j$. Then let
$q_i=\sup_j p_j^i$ for all $i$. Then $\sup_i q_i=1$ and $\{q_i\}$
be a countable orthogonal set of equivalent projections. We assert
that $\sum^o_{ij} p_iB(H)p_j\neq \sum^o_{ij} q_iB(H)q_j$. Indeed,
let $\{x_{ij}\}$ be a set of matrix units constructed by the
infinite set $\{p_j^1\}_j\in \{\{p_j^i\}_j\}_i$, i.e. for all $i$,
$j$, $x_{ij}x_{ij}^*=p_i^1$, $x_{ij}^*x_{ij}=p_j^1$,
$x_{ii}=p_i^1$. Then the von Neumann algebra $\mathcal{N}$
generated by the set $\{x_{ij}\}$ is isometrically isomorphic to
$B(\mathcal{H})$ for some Hilbert space $\mathcal{H}$. We note
that $\mathcal{N}$ is not subset of $\sum^o_{ij} p_iB(H)p_j$. At
the same time, $\mathcal{N}\subseteq \sum^o_{ij} q_iB(H)q_j$ and
$\sum^o_{ij} p_i^1\mathcal{N}p_j^1\subseteq \sum^o_{ij}
p_iB(H)p_j$.

\begin{thm}
Let $A$ be a unital simple C$^*$-algebra of bounded linear
operators in a Hilbert space $H$, $\{p_i\}$ be a countable
orthogonal set of equivalent projections in $A$ and $\sup_i
p_i=1$. Let $\{q_i\}$ be a countable orthogonal set  of equivalent
projections in $B(H)$ defined by the set $\{p_i\}$ in $B(H)$. Then
$\sum_{ij}^o q_iAq_j$ is a simple C$^*$-algebra.
\end{thm}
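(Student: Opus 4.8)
The plan is to show that every nonzero closed two-sided ideal $I$ of $B:=\sum_{ij}^o q_iAq_j$ coincides with $B$; by corollary 7 we already know $B$ is a C$^*$-subalgebra of $A$, so this is exactly simplicity. Throughout I would use two elementary observations. First, each fine projection $p_s\in\{p_i\}$ and, more generally, each fine block $p_sAp_t$ occupies only a single coarse position, so the border condition is satisfied trivially; hence $p_sAp_t\subseteq B$ for all $s,t$, and in particular $p_tBp_t=p_tAp_t$ (using $B\subseteq A$). Second, since $\{p_i\}$ is a set of mutually equivalent projections in $A$, for each $s$ there is a partial isometry $u_s\in A$ with $u_su_s^*=p_s$ and $u_s^*u_s=p_t$, and $u_s\in p_sAp_t\subseteq B$.

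First I would manufacture a fine projection inside $I$. Take $0\ne a\in I$. Applying lemma 2 to the fine family $\{p_i\}$ (which has least upper bound $1$) yields indices $s,t$ with $p_sap_t\ne0$; since $p_s,p_t\in B$, the element $p_sap_t=p_s\,a\,p_t$ lies in $I$, and so does the nonzero positive element $b:=(p_sap_t)^*(p_sap_t)\in I\cap p_tAp_t$. Now $p_tAp_t$ is a corner of the simple C$^*$-algebra $A$, hence is itself simple, and $p_tIp_t$ is a nonzero closed two-sided ideal of $p_tBp_t=p_tAp_t$ containing $b$. Simplicity of the corner forces $p_tIp_t=p_tAp_t$, and in particular $p_t\in I$. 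Next I would spread this projection over all fine blocks: for every $s$ the partial isometry $u_s\in B$ gives $p_s=u_sp_tu_s^*\in I$, so every fine projection lies in $I$, and then for every $c\in p_sAp_t\subseteq B$ we have $c=p_s c\in I$. Hence $p_sAp_t\subseteq I$ for all $s,t$, so the norm closure of the linear span of $\bigcup_{s,t}p_sAp_t$ is contained in $I$.

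The final, and hardest, step is to show that this exhausts $B$. Recall from the discussion preceding theorem 6 that for any $x\in B$ the coarse truncations $x_N=\sum_{i,j\le N}a_{ij}$, with $a_{ij}=q_ixq_j\in q_iAq_j$, converge to $x$ in norm; since $I$ is closed it therefore suffices to prove that each coarse block $q_iAq_j$ is contained in $I$. We already know that every finite fine truncation $P_M^iyP_M^j$ of an element $y\in q_iAq_j$ lies in $I$ (it is a finite sum of fine blocks), where $P_M^i=\sum_{\xi\le M}p_\xi^i$; so the problem reduces to the approximation statement $\Vert y-P_M^iyP_M^j\Vert\to0$ for every $y\in q_iAq_j$.

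This approximation is the crux of the argument and the main obstacle. The subtlety is that, by the proof of lemma 3, the fine truncations of a \emph{general} element of the corner converge only ultraweakly, so norm convergence cannot be taken for granted, and the individual coarse blocks carry no a priori decay in the fine direction. The point to exploit is that the elements of $q_iAq_j$ are genuine elements of $A$ (corollary 7), and that the coarse border condition built into $B$ plays, in the fine direction, the role that compactness plays in the proof that $K(H)$ is simple: for $A=K(H)$ one checks directly that $B=K(H)$, that the fine-finite elements are exactly the finite-rank operators, and that these are norm dense. I would establish the general statement by showing that an element of $A$ supported in a single coarse block cannot have a nonvanishing fine tail without producing a bounded net of fine-finite compressions that violates the decomposition-norm identity of lemma 4; once this norm approximation is in hand we obtain $q_iAq_j\subseteq I$ for all $i,j$, whence $I=B$, and $B$ is simple.
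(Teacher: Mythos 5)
The first two thirds of your argument are sound, and at one point sharper than the paper's own proof: producing $p_t\in I$ via lemma 2 and simplicity of the corner $p_tAp_t$, then spreading it with the partial isometries $u_s\in p_sAp_t\subseteq B$, supplies exactly the nontriviality ($qIq\neq 0$) that the paper's compression argument silently skips. But your final step rests on a false claim. The approximation $\Vert y-P_M^iyP_M^j\Vert\to 0$ for every $y\in q_iAq_j$ fails: take $y=q_i$ itself, i.e.\ the family $\{p_\xi^i\,1\,p_\eta^i\}_{\xi\eta}$ coming from $a=1$ (recall $A$ is unital), which lies in $q_iAq_i\subseteq B$. Then $P_M^iyP_M^i=P_M^i$, and $q_i-P_M^i$ is a nonzero projection, so $\Vert y-P_M^iyP_M^i\Vert=1$ for every $M$. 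Consequently the norm-closed linear span of the fine blocks $p_sAp_t$ is a \emph{proper} closed subalgebra of $B$ (it does not contain $q_i$), and no refinement of your tail-estimate sketch can repair this: the ``crux'' you postponed is not a technical obstacle but a false statement. Your own caveat that lemma 3 yields only ultraweak convergence of fine truncations was the correct warning sign.

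The way to finish --- and it is what the paper does --- is to compress the ideal into \emph{coarse} corners rather than approximate by fine ones. Fix $i,j$ and put $e=q_i+q_j$. Then $eAe=q_iAq_i+q_iAq_j+q_jAq_i+q_jAq_j\subseteq B$, since elements with finitely many nonzero coarse components satisfy the border condition trivially; $eIe=I\cap eAe$ is a closed two-sided ideal of $eAe$; it is nonzero because your fine projections $p_\xi^i\leq e$ already lie in $I$; and $eAe$ is simple --- the paper argues that $e$ is equivalent to $1$ in $A$ (both are least upper bounds of countably infinite orthogonal families from the single equivalence class $\{p_i\}$), so $eAe\cong A$, or you may invoke simplicity of hereditary subalgebras of a simple C$^*$-algebra exactly as you did for $p_tAp_t$. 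Hence $eIe=eAe$, and therefore $q_iAq_j=q_i(eIe)q_j\subseteq I$ for all $i,j$. Now use the convergence that \emph{is} built into the definition of $B$: for $x\in B$ the coarse truncations $\sum_{i,j\leq N}q_ixq_j$ converge to $x$ in norm, each lies in $I$, and $I$ is closed; therefore $I=B$. In short, keep your first half (it fills a genuine gap in the paper), but replace the fine-approximation finale by the paper's coarse-corner compression.
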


\begin{proof} By theorem 6 $\sum_{ij}^o p_iAp_j$ is a C$^*$-algebra.
Let $\{\{p_j^i\}_j\}_i$ be the set of infinite subsets of
$\{p_i\}$ such that for all distinct $\xi$ and $\eta$
$\{p_j^\xi\}_j\cap \{p_j^\eta\}_j=\oslash$,
$\vert\{p_j^\xi\}_j\vert=\vert \{p_j^\eta\}_j\vert$ and
$\{p_i\}=\cup_i \{p_j^i\}_j$. Then let $q_i=\sup_j p_j^i$ in
$B(H)$, for all $i$. Then we have for all $i$ and $j$
$q_iAq_j=\{\{p_\xi^iap_\xi^j\}: a\in A\}$. Hence $q_iAq_j\subset
A$ for all $i$ and $j$. By corollary 7 $\sum_{ij}^o q_iAq_j$ is a
C$^*$-algebra.

Since projections of the set $\{p_i\}$ pairwise equivalent then
the projection $q_i$ is equivalent to $1\in A$ for any $i$. Hence
$q_iAq_i\cong A$ and $q_iAq_i$ is a simple C$^*$-algebra for any
$i$.

Let $q$ be an arbitrary projection in $\{q_i\}$. Then $qAq$ is a
subalgebra of $\sum_{ij}^o q_iAq_j$. Let $I$ be a closed ideal of
the algebra $\sum_{ij}^o q_iAq_j$. Then $IqAq\subset I$ and
$Iq\cdot qAq\subset Iq$. Therefore $qIqqAq\subseteq qIq$, that is
$qIq$ is a closed ideal of the subalgebra $qAq$. Since $qAq$ is
simple then $qIq=qAq$.

Let $q_1$, $q_2$ be arbitrary projections in $\{q_i\}$. We assert
that $q_1Iq_2=q_1Aq_2$ and $q_2Iq_1=q_2Aq_1$. Indeed, we have the
projection $q_1+q_2$ is equivalent to $1\in A$. Let $e=q_1+q_2$.
Then $eAe\cong A$ and $eAe$ is a simple C$^*$-algebra. At the same
time we have $eAe$ is a subalgebra of $\sum_{ij}^o q_iAq_j$ and
$I$ is an ideal of $\sum_{ij}^o q_iAq_j$. Hence $IeAe\subset I$
and $Ie\cdot eAe\subset Ie$. Therefore $eIeeAe\subseteq eIe$, that
is $eIe$ is a closed ideal of the subalgebra $eAe$. Since $eAe$ is
simple then $eIe=eAe$. Hence $q_1Iq_2=q_1Aq_2$ and
$q_2Iq_1=q_2Aq_1$. Therefore $q_iIq_j=q_iAq_j$ for all $i$ and
$j$. We have $I$ is norm closed. Hence $I=\sum_{ij}^o q_iAq_j$,
i.e. $\sum_{ij}^o q_iAq_j$ is a simple C$^*$-algebra.
\end{proof}

\section{Applications}

\begin{thm}
Let $\mathcal{N}$ be a W$^*$-factor of type II$_\infty$ of bounded
linear operators in a Hilbert space $H$, $\{p_i\}$ be a countable
orthogonal set of equivalent projections in $\mathcal{N}$ and
$\sup_i p_i=1$. Then for any countable orthogonal set $\{q_i\}$ of
equivalent projections in $B(H)$ defined by the set $\{p_i\}$ in
$B(H)$ the C$^*$-algebra $\sum_{ij}^o q_i\mathcal{N}q_j$ is a
C$^*$-factor with a nonzero finite and an infinite projection. In
this case $\sum_{ij}^o q_i\mathcal{N}q_j$ is not a von Neumann
algebra.
\end{thm}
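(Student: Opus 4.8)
The plan is to work inside $\mathcal N$ and to exploit the block/matrix structure supplied by the family $\{q_i\}$. By Corollary 7 the set $B:=\sum_{ij}^o q_i\mathcal N q_j$ is already a C$^*$-algebra sitting inside $\mathcal N$, so it remains to produce the two kinds of projections, to verify the factor property, and to rule out being a von Neumann algebra. First I would fix coordinates. Since each $q_i=\sup_j p_j^i$ is a supremum of projections of the von Neumann algebra $\mathcal N$, it lies in $\mathcal N$ and coincides with the supremum computed in $B(H)$; being a supremum of infinitely many mutually orthogonal equivalent projections it is properly infinite, of infinite trace, hence $q_i\sim 1$ and the corner $q_i\mathcal N q_i\cong\mathcal N$ is again of type II$_\infty$. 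Choosing matrix units $e_{ij}\in q_i\mathcal N q_j$ with $e_{ii}=q_i$ identifies $\mathcal N$ with $\mathcal M\,\bar\otimes\,B(\ell^2)$, where $\mathcal M=q_1\mathcal N q_1$. The observation I would use repeatedly is that every single block $q_i\mathcal N q_j$, and more generally every finitely supported matrix, satisfies the decay condition $\Vert\sum_{k<i}(a_{ki}+a_{ik})+a_{ii}\Vert\to 0$ trivially (the sequence of these norms is eventually $0$), so all such elements lie in $B$; in particular $B$ is weakly dense in $\mathcal N$.

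Next I would exhibit the projections. For the infinite projection take $q_1\in B$: since $q_1\mathcal N q_1\subseteq B$ is of type II$_\infty$, it contains a shift $v$ with $v^*v=q_1$ and $vv^*<q_1$, and $v$ is supported in the single block $q_1\mathcal N q_1$, so $v\in B$ and $q_1$ is infinite in $B$. For the finite projection I use that $q_1\mathcal N q_1\cong\mathcal N$, being of type II$_\infty$, contains a nonzero finite projection $f$ of finite trace. The key point is to test finiteness \emph{inside} $B$ rather than inside $\mathcal N$: because $f\le q_1$ lies in the single block $q_1\mathcal N q_1\subseteq B$, one has $fBf=f\mathcal N f$, which is a II$_1$ factor. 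Any partial isometry witnessing infiniteness of $f$ in $B$ would have to lie in $fBf$, which is finite, so $f$ is a nonzero finite projection of $B$.

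For the factor property I would compute the center of $B$. If $z\in B$ is central, then $z$ commutes with every matrix unit $e_{ij}\in B$, so $z\in\{e_{ij}\}'\cap\mathcal N=\mathcal M\otimes 1$; in the matrix picture this means that $z$ is diagonal with a single constant diagonal entry $m$ and vanishing off-diagonal part, whence the decay condition reads $\Vert m\Vert\to 0$ and forces $m=0$. Thus $Z(B)=\{0\}$, and together with the weak density of $B$ in the factor $\mathcal N$ (so that the von Neumann algebra generated by $B$ is a factor) this is what I would record as ``$B$ is a C$^*$-factor''. Finally, $B$ is not a von Neumann algebra because it is not unital: the identity $1=\sum_i q_i$ fails the decay condition, since $\Vert\sum_{k<i}0+q_i\Vert=\Vert q_i\Vert=1\not\to 0$, and a unit of $B$ would necessarily dominate every $q_i$ and hence equal $1$; equivalently, $B$ is weakly dense in but strictly smaller than $\mathcal N$, so it is not weakly closed.

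I expect the factor property to be the main obstacle, both because the correct non-unital notion of ``factor'' has to be fixed and because the interaction of the center (and of ideals) with the decay condition must be controlled, whereas the projection statements reduce cleanly to the corner identities $q_1\mathcal N q_1\subseteq B$ and $fBf=f\mathcal N f$, and the failure to be a von Neumann algebra is immediate from non-unitality. I would also note, as a consistency check, that $B$ is a factor which is \emph{not} simple: the norm-closed ideal of $B$ generated by the finite projection $f$ cannot contain the infinite projection $q_1$, in agreement with the coexistence of finite and infinite projections and in contrast with the purely infinite, simple behaviour obtained in the type III case.
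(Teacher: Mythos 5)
Your proposal is correct, and its skeleton matches the paper's: $B=\sum_{ij}^o q_i\mathcal{N}q_j$ is a C$^*$-algebra by Theorem 6/Corollary 7, the diagonal corners $q_i\mathcal{N}q_i\subseteq B$ supply both kinds of projections, weak density of $B$ in $\mathcal{N}$ enters the factor argument, and $1\notin B$ rules out being a von Neumann algebra. In two sub-steps, however, you take a genuinely different and cleaner route. For the finite projection the paper starts from an arbitrary nonzero finite projection $q$ of $\mathcal{N}$, picks $p\in\{q_i\}$ with $qp\neq 0$, sets $x=pq$, and argues that $x\mathcal{N}x^*$ is a weakly closed algebra with a faithful center-valued trace whose nonzero projections land in $p\mathcal{N}p\subseteq B$; you instead use that $q_1\mathcal{N}q_1\cong\mathcal{N}$ is itself of type II$_\infty$, pick a finite projection $f$ there, and test finiteness relative to $B$ via the identity $fBf=f\mathcal{N}f$. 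This bypasses the delicate claims about $x\mathcal{N}x^*$ and is the more careful argument: the paper never explicitly verifies that its projection $e$ (or the $q_i$) are finite (resp.\ infinite) \emph{as projections of $B$}, although that gap is easily closed since $B\subseteq\mathcal{N}$. For factoriality the paper relies solely on weak density and separate weak continuity of multiplication (a central element of $B$ is central in the weak closure $\mathcal{N}$, hence scalar, hence zero since $1\notin B$); you compute the center outright: a central $z$ commutes with the matrix units $e_{ij}\in B$, hence is block-diagonal with constant entry $m$, and the decay condition defining $B$ forces $\Vert m\Vert\to 0$, so $z=0$ --- an algebraic argument that does not invoke the weak topology at all. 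Your non-unitality argument for ``not a von Neumann algebra'' is likewise tighter than the paper's representation-dependent ``not weakly closed in $\mathcal{N}$''. One small repair on your side: ``properly infinite, of infinite trace, hence $q_i\sim 1$'' is not a valid inference in a general (possibly non-$\sigma$-finite) factor; the correct and immediate justification is that $q_i$ and $1$ are both countably infinite orthogonal sums of projections equivalent to $p_1$, so the witnessing partial isometries can be summed.
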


\begin{proof}
By the definition of the set $\{q_i\}$ we have $\sup_i q_i=1$ and
$\{q_i\}$ be a countable orthogonal set of equivalent {\it
infinite} projections. By theorem 6 we have $\sum_{ij}^o
q_i\mathcal{N}p_j$ is a C$^*$-subalgebra of $\mathcal{N}$. Let $q$
be a nonzero finite projection of $\mathcal{N}$. Then there exists
a projection $p\in \{q_i\}$ such that $qp\neq 0$. We have
$q\mathcal{N} q$ is a finite von Neumann algebra. Let $x=pq$. Then
$x\mathcal{N}x^*$ is a weakly closed C$^*$-subalgebra. Note that
the algebra $x\mathcal{N}x^*$ has a center-valued faithful trace.
Let $e$ be a nonzero projection of the algebra $x\mathcal{N}x^*$.
Then $ep=e$ and $e\in p\mathcal{N}p$. Hence $e\in \sum_{ij}^o
q_i\mathcal{N}q_j$. We have the weak closure of $\sum_{ij}^o
q_i\mathcal{N}q_j$ in the algebra $\mathcal{N}$ coincides with
this algebra $\mathcal{N}$. Then by the weak continuity of the
multiplication $\sum_{ij}^o q_i\mathcal{N}q_j$ is a factor. Note
since $1\notin \sum_{ij}^o q_i\mathcal{N}q_j$ then $\sum_{ij}^o
q_i\mathcal{N}q_j$ is not weakly closed in $\mathcal{N}$. Hence
the C$^*$-factor $\sum_{ij}^o q_i\mathcal{N}q_j$ is not a von
Neumann algebra.
\end{proof}

{\it Remark.} Note that, in the article \cite{Ror3} a simple
C$^*$-algebra with an infinite and a nonzero finite projection
have been constructed by M.R{\o}rdam. In the next corollary we
construct a simple purely infinite C$^*$-algebra. Note that simple
purely infinite C$^*$-algebras are considered and investigated, in
particular, in \cite{Ph} and \cite{Ror}.

\begin{thm}
Let $\mathcal{N}$ be a W$^*$-factor of type III of bounded linear
operators in a Hilbert space $H$. Then for any countable
orthogonal set $\{p_i\}$ of equivalent projections in
$\mathcal{N}$ such that $\sup_i p_i=1$, $\sum_{ij}^o
p_i\mathcal{N}p_j$ is a simple purely infinite C$^*$-algebra. In
this case $\sum_{ij}^o p_i\mathcal{N}p_j$ is not a von Neumann
algebra.
\end{thm}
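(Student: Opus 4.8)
The plan is to establish two things about $B = \sum_{ij}^o p_i\mathcal{N}p_j$: that it is simple, and that every nonzero hereditary subalgebra contains an infinite projection (which, combined with simplicity, yields pure infiniteness). The starting point is that by Theorem~6, $B$ is a genuine C$^*$-algebra sitting inside $\mathcal{N}$, and each $p_i$ is equivalent to $1$ in $\mathcal{N}$ since the $p_i$ are pairwise equivalent with supremum $1$ in a type III factor. Consequently $p_i\mathcal{N}p_i \cong \mathcal{N}$, so each corner $p_i B p_i = p_i\mathcal{N}p_i$ is itself a type III factor. This is the crucial structural input, and it is exactly where the type III hypothesis enters: in a type III factor \emph{every} nonzero projection is infinite and any two nonzero projections below a given one are Murray--von Neumann subequivalent.

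First I would prove simplicity by the same corner argument used in Theorem~8. Let $I$ be a nonzero closed ideal of $B$. For a projection $q \in \{p_i\}$, the compression $qIq$ is a closed ideal of the simple algebra $qBq = q\mathcal{N}q$, so $qIq$ is either $0$ or all of $q\mathcal{N}q$; a short argument (picking a nonzero element of $I$ and compressing) forces it to be nonzero for at least one $q$, hence $qIq = q\mathcal{N}q$. Then for any pair $p_i, p_j$ one sets $e = p_i + p_j$, observes $eBe = e\mathcal{N}e$ is simple because $e \sim 1$, and deduces $e I e = e\mathcal{N}e$, giving $p_iIp_j = p_i\mathcal{N}p_j$ for all $i,j$. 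Since $I$ is norm-closed and these corners norm-densely generate $B$, we conclude $I = B$.

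Next I would handle pure infiniteness. Given a nonzero positive $a \in B$, I want a nonzero projection in $\overline{aBa}$ that is infinite; by simplicity it suffices to produce one infinite projection anywhere in a hereditary subalgebra meeting $a$. Choose $p_i$ with $p_i a p_i \neq 0$; then $a$ has nonzero compression into the type III corner $p_i\mathcal{N}p_i$. Using the standard characterization of pure infiniteness (for every nonzero $a \geq 0$ and $\varepsilon > 0$ there is $x$ with $x^* a x$ close to a projection), or more directly the fact that in the type III corner there is a projection $e \leq p_i$ dominated by a spectral projection of $p_iap_i$ with $e \sim p_i$, one produces a projection $e \in p_i\mathcal{N}p_i \subseteq B$ that is infinite in $B$. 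The point is that $e \sim p_i \sim 1$ in $\mathcal{N}$, and $p_i$ is properly infinite inside $B$ because the matrix-unit structure furnishes a proper subprojection of $p_i$ equivalent to $p_i$ \emph{within} $B$; hence $e$ is an infinite projection of $B$ lying in $\overline{aBa}$.

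\textbf{The main obstacle} will be the last step: transferring infiniteness from the \emph{von Neumann algebra} $\mathcal{N}$ to the \emph{norm-subalgebra} $B$. A projection can be infinite in $\mathcal{N}$ via a Murray--von Neumann equivalence $u$ that does not lie in $B$ (the off-diagonal decay condition defining $\sum_{ij}^o$ can fail for such a $u$), so one cannot simply quote type III infiniteness. I would resolve this by exhibiting the witnessing partial isometries explicitly from the matrix-unit system $\{p_j^i\}$ underlying the construction: within a single column one builds a partial isometry $v \in B$ with $v^*v = p_i$ and $vv^* \lneq p_i$, certifying that $p_i$ (and hence any $e \sim p_i$ in $B$) is properly infinite \emph{in $B$}. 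Verifying that such $v$ genuinely satisfies the summability condition of $\sum_{ij}^o p_i\mathcal{N}p_j$ is the delicate computation, and it is what makes the type III case work uniformly where the type II$_\infty$ case only gave a factor with a finite projection. Finally, $1 \notin B$ (the identity fails the off-diagonal decay condition), so $B$ is not weakly closed and hence not a von Neumann algebra, exactly as in Theorem~9.
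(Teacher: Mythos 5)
Your overall strategy is sound, and it genuinely differs from the paper's in the pure-infiniteness half. For simplicity, the paper does not rerun the corner argument directly on $\{p_i\}$: it uses the type III hypothesis to split each $p_i$ into a countable orthogonal family $\{p_i^j\}_j$ of equivalent projections, observes that $\{p_i\}$ is then ``defined by'' the union $\bigcup_i \{p_i^j\}_j$, and invokes Theorem 8 as a black box; your inlined version (each $p_i\sim 1$, hence $p_i\mathcal{N}p_i$ and $(p_i+p_j)\mathcal{N}(p_i+p_j)$ are simple, compress the ideal, conclude by norm-density of the finite partial sums) is the same underlying corner argument, made direct by the observation $p_i\sim 1$, which is indeed valid in a type III factor. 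For pure infiniteness the two proofs really diverge: the paper only argues that $B=\sum^o_{ij}p_i\mathcal{N}p_j$ contains no nonzero \emph{finite} projection (via a trace argument on $x\mathcal{N}x^*$ with $x=pq$), whereas you aim at the actual Cuntz definition, an infinite projection in every nonzero hereditary subalgebra. Your key observation --- that a partial isometry lying in a single corner $p_i\mathcal{N}p_i$ (or $f\mathcal{N}f$ with $f\leq p_i$) automatically belongs to $B$ --- is correct, and is in fact trivial rather than ``delicate'': such an element has exactly one nonzero matrix component, so the decay condition defining $\sum^o_{ij}$ is vacuous for it. This is what transfers type III infiniteness in $\mathcal{N}$ into infiniteness witnessed inside $B$, and it is a genuine strengthening of what the paper establishes.

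However, one step of your plan fails as written. The claim that ``by simplicity it suffices to produce one infinite projection anywhere in a hereditary subalgebra meeting $a$'' is false: R{\o}rdam's algebra, cited in the paper's own remark, is simple and contains infinite projections, yet is not purely infinite; so the infinite projection must be located inside $\overline{aBa}$ itself, for \emph{every} nonzero positive $a\in B$. Your candidate --- a projection dominated by a spectral projection of $b=p_iap_i$ --- lies in the hereditary subalgebra generated by $b$, which is in general \emph{not} contained in $\overline{aBa}$: compression by $p_i$ takes you out of the hereditary subalgebra. The repair is the standard maneuver you half-name but do not execute: choose $\varepsilon>0$ with $f=\chi_{[\varepsilon,\Vert b\Vert]}(b)\neq 0$, let $y=(bf)^{-1/2}$ (inverse square root computed in the corner $f\mathcal{N}f$), and note $y\in f\mathcal{N}f\subseteq B$ and $yay=yby=f$; then $v=a^{1/2}y\in B$ satisfies $v^*v=f$, and $vv^*$ is a projection lying in $\overline{aBa}$ and equivalent in $B$ to $f$. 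Since $f\mathcal{N}f$ is again a type III factor, $f$ is infinite with witnessing partial isometry in $f\mathcal{N}f\subseteq B$, hence $vv^*$ is an infinite projection of $B$ inside $\overline{aBa}$. With this substitution your argument is complete; without it, the pure-infiniteness step does not go through.
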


\begin{proof}
Let $p_{i_o}$ be a projection in $\{p_i\}$. We have the projection
$p_{i_o}$ can be exhibited as a least upper bound of a countable
orthogonal set $\{p_{i_o}^j\}_j$ of equivalent projections in
$\mathcal{N}$. Then for any $i$ the projection $p_i$ has a
countable orthogonal set $\{p_i^j\}_j$ of equivalent projections
in $\mathcal{N}$ such that the set $\bigcup_i \{p_i^j\}_j$ is a
countable orthogonal set of equivalent projections in
$\mathcal{N}$. Hence the set $\{p_i\}$ is defined by the set
$\bigcup_i \{p_i^j\}_j$ in $B(H)$ (in $\mathcal{N}$). Hence by
theorem 8 $\sum_{ij}^o p_i\mathcal{N}p_j$ is a simple
C$^*$-algebra. Note since $1\notin \sum_{ij}^o p_i\mathcal{N}p_j$
then $\sum_{ij}^o p_i\mathcal{N}p_j$ is not weakly closed in
$\mathcal{N}$. Hence $\sum_{ij}^o p_i\mathcal{N}p_j$ is not a von
Neumann algebra.

Suppose there exists a nonzero finite projection $q$ in
$\sum_{ij}^o p_i\mathcal{N}p_j$. Then there exists a projection
$p\in \{p_i\}$ such that $qp\neq 0$. We have $q(\sum_{ij}^o
p_i\mathcal{N}p_j)q$ is a finite C$^*$-algebra. Let $x=pq$. Then
$x\mathcal{N}x^*$ is a C$^*$-subalgebra. Moreover
$x\mathcal{N}x^*$ is weakly closed and $x\mathcal{N}x^*\subset
p\mathcal{N}p$. Hence $x\mathcal{N}x^*$ has a center-valued
faithful trace. Then $x\mathcal{N}x^*$ is a finite von Neumann
algebra with a center-valued faithful normal trace. Let $e$ be a
nonzero projection of the algebra $x\mathcal{N}x^*$. Then $ep=e$
and $e\in p\mathcal{N}p$. Hence $e\in \mathcal{N}$. This is a
contradiction.
\end{proof}

{\it Example.} Let $H$ be a separable Hilbert space and $B(H)$ the
algebra of all bounded linear operators on $H$. Let $\{q_i\}$ be a
maximal orthogonal set of equivalent minimal projections in
$B(H)$. Then $\sum_{ij}^o q_i B(H)q_j$ is a two sided closed ideal
of the algebra $B(H)$. Using the set $\{q_i\}$ we construct a
countable orthogonal set $\{p_i\}$ of equivalent infinite
projections such that $\sup_i p_i=1$. Let $\{\{q_j^i\}_j\}_i$ be
the countable set of countable subsets of $\{q_i\}$ such that for
all distinct $i_1$ and $i_2$ $\{q_j^{i_1}\}_j\cap
\{p_j^{i_2}\}_j=\oslash$ and $\{q_i\}=\cup_i \{q_j^i\}_j$. Then
let $p_i=\sup_j q_j^i$ for all $i$. Then $\sup_i p_i=1$ and
$\{p_i\}$ is a countable orthogonal set of equivalent infinite
projections in $B(H)$ defined by $\{q_i\}$ in $B(H)$.

Let $\{q_{nm}^{ij}\}$ be the set of matrix units constructed by
the set $\{\{q_j^i\}_j\}_i$, i.e.
$q_{nm}^{ij}{q_{nm}^{ij}}^*=q_n^i$,
${q_{nm}^{ij}}^*q_{nm}^{ij}=q_m^j$, $q_{nn}^{ii}=q_n^i$ for all
$i$, $j$,$n$,$m$. Then let $a=\{a_{nm}^{ij}q_{nm}^{ij}\}$ be the
decomposition of the element $a\in B(H)$, where the components
$a_{nm}^{ij}$ are defined as follows
$$
a_{11}^{11}=\lambda, a_{12}^{21}=\lambda,
a_{13}^{31}=\lambda,\dots,a_{1n}^{n1}=\lambda,\dots,
$$
and the rest components $a_{nm}^{ij}$ are zero, i.e.
$a_{nm}^{ij}=0$. Then $p_1a=a$. Then since $a\notin \sum_{ij}^o
p_i B(H)p_j$ and $p_1\in \sum_{ij}^o p_i B(H)p_j$ then
$\sum_{ij}^o p_i B(H)p_j$ is not an ideal of $B(H)$. But by
theorem 6 $\sum_{ij}^o p_i B(H)p_j$ is a C$^*$-algebra. Hence
there exists a C$^*$-algebra $A$ with an orthogonal set $\{p_i\}$
of equivalent projections such that $\sum_{ij}^o p_iAp_j$ is not
an ideal of $A$.

\end{document}